\documentclass{amsart}

\usepackage[foot]{amsaddr}
\usepackage[alphabetic]{amsrefs}
\usepackage[headings]{fullpage}
\usepackage{booktabs,subfigure,tikz}
%% The amssymb package provides various useful mathematical symbols
\usepackage{algorithm, algorithmic}
\usepackage{color,hyperref,newtxtext,newtxmath}

\def\H{\mathcal H}
\def\R{\mathbb R}
\def\na{\nabla}
\def\x{\times}
\def\Om{\Omega}
\def\Id{\text{Id}}
\newcommand{\abs}[1]{\lvert#1\rvert}

\newcommand{\nm}[2]{\|#1\|_{#2}}
\def\dx{\,\mathrm{d}x}
\def\pa{\partial}
\newcommand{\mc}[1]{\mathcal{#1}}

\newcommand{\lr}[1]{\Bigl(#1\Bigr)}
\newcommand{\set}[2]{\left\{\,#1\,\mid\,#2\,\right\}}

\theoremstyle{plain}
\newtheorem{theorem}{Theorem}[section]

\newtheorem{example}[theorem]{Example}

\newtheorem{proposition}[theorem]{Proposition}

\begin{document}

\title[A pre-training DL for bilayer bending]{A pre-training deep learning method for simulating the large bending deformation of bilayer plates}%
\author[X. Li\and Y. L. Liao\and P. B. Ming]{Xiang Li\and Yulei Liao\and Pingbing Ming}
\email{lixiang615@lsec.cc.ac.cn, liaoyulei@lsec.cc.ac.cn, mpb@lsec.cc.ac.cn}
\address{LSEC, Institute of Computational Mathematics and Scientific/Engineering Computing, AMSS, Chinese Academy of Sciences, Beijing 100190, China}
\address{School of Mathematical Sciences, University of Chinese Academy of Sciences, Beijing 100049, China}
\thanks{Funding: This work was supported by National Natural Science Foundation of China [Grants No. 12371438 and 11971467].}

\begin{abstract}
We propose a deep learning based method for simulating the large bending deformation of bilayer plates. Inspired by the greedy algorithm, we propose a pre-training method on a series of nested domains, which accelerate the convergence of training and find the absolute minimizer more effectively. The proposed method exhibits the capability to converge to an absolute minimizer, overcoming the limitation of gradient flow methods getting trapped in the local minimizer basins. We showcase better performance with fewer numbers of degrees of freedom for the relative energy errors and relative $L^2$-errors of the minimizer through numerical experiments. Furthermore, our method successfully maintains the $L^2$-norm of the isometric constraint, leading to an improvement of accuracy.
\end{abstract}

\keywords{Deep learning; Pre-training method; Nonlinear elasticity; Bilayer bending; Isometric constraint}
\subjclass[2020]{39-08, 49S05, 65Q20, 68T07, 74-10, 74G65}
\date{\today}
\maketitle

%\linenumbers

%% main text
\section{Introduction}
In the field of solid mechanics, particularly in modern nanoscale processes, the bending and deformation of plates are classical problems. Linear models are commonly used to describe small deformations, while nonlinear models are employed to capture large deformations~\cite{Ciarlet:1997}. By letting the thickness of the plate tends to zero, a series of works~\cite{Muller:2002,muller2002,Friesecke:2006} have rigorously derived the dimensional reduced models from three-dimensional elasticity, under different scaling of energy with respect to the thickness, with the aid of $\Gamma$-convergence~\cite{Giorgi:1975,Dalmaso:1993}. Numerical methods for simulating the model formulated in~\cite{muller2002}, which coincides with the nonlinear Kirchhoff model~\cite{Kirchhoff1850}, have been proposed in~\cite{DKT13,bar13b,DG2021,LiMing:2023} along the same line. The main challenges lie in preserving isometry during deformation that keeping relations for length and angle unchanged, similar to a piece of paper.

Bilayer plates, which consist of two compound materials with slightly different properties, present greater complexity in terms of modeling and simulation, and have garnered significant attention. These structures find wide-ranging applications in aerospace, civil engineering, and materials science~\cite{Suzuki:1994,Smela:1995,Jager:2000,Schmidt:2001,Stoychev:2011,Kuo:2005,Bassik:2010,Jan:2016,Ye:2016}. When exposed to external heating or electrification, material mismatches between the two layers result in large bending deformations in bilayer plates. The study of bilayer plates involves the development of mathematical models and analytical techniques aimed at gaining a better understanding of their deformation. The classical model of bilayer~\cite{Timoshenko:1925analysis,Suhir:1986,Kuo:1989} can bend only in one direction with uniform curvature. Building upon the work~\cite{muller2002} that concerns curvature in two directions, certain mathematical models~\cite{schmidt2007minimal,Schmidt:2007} formulate the large bending deformation for multi-layer plates under different scaling of energy. Meanwhile a heuristic derivation for bilayer plates may be found in~\cite{Bar17bilayer}.

We call back the model for bilayer plates in~\cite{Bar17bilayer}: Given a domain $\Om\subset \R^{2}$ describing the middle surface of the reference configuration of the plate, and an intrinsic spontaneous curvature tensor $Z\in\R^{2\x 2}$,  which usually stands for the difference between the material properties of two layers, we minimize the elastic energy
\begin{equation}\label{eq:plate}
E[u]{:}=\frac{1}{2} \int_{\Om}\abs{H(u)+ Z}^{2}\dx-\int_{\Om} f \cdot u \dx
\end{equation}
within the isometric constraint for $u:\Om\to\R^3$, i.e.,
\begin{equation}\label{eq:isometry}
    [\na u(x)]^{\top} \na u(x)=\Id_{2},\quad\text{a.e.,\;} x\in\Om, 
\end{equation}
with the boundary $\Gamma_D\subset\partial \Om$ clamped, i.e.,
 \begin{equation}\label{eq:bc}
 u(x) = g,\quad \na u = \Phi\quad \text{on\;} \Gamma_D,
 \end{equation}
where $H(u)$ is the second fundamental form of the parametrized surface, i.e., $H_{ij}(u)=n\cdot\pa_i\pa_ju,n = \pa_1u \x \pa_2u,\Id_{2}$ is a two by two identity matrix, $f,g:\Omega\to\R^3$and $\Phi:\Omega\to\R^{3\times 2}$ satisfies $\Phi^\top\Phi=\Id_{2}$. 

This model, which can be viewed as a non-convex minimization problem with a nonlinear constraint, has been investigated numerically by \textsc{Bartels et al.}~\cite{Bar17bilayer,barThermal18,bartels2022stable} using the framework of $H^2$-gradient flow iteration. In each iteration, they discretize the sub-problem in the tangent space of the admissible set to linearize the nonlinear constraint. They employed various Kirchhoff triangles for spatial discretization, and established the $\Gamma$-convergence to the stationary configuration in theory. More recently, another work~\cite{bonito2020discontinuous} exploited the discontinuous Galerkin discretization to enhance the accuracy and flexibility of the algorithm. Moreover, a self-avoiding method, which ensures that the deformation does not contain self-intersections or self-contact, is proposed in~\cite{bartels22self-contact}. 
%However, we do not address this aspect in our work due to some unrealistic simulation outcomes.

Unfortunately, due to the large deformation and the non-convexity of the energy, the equilibrium configurations obtained from the above methods are highly sensitive to the discretization mesh size $h$ and the pseudo-timestep $\tau$ in the gradient flow. The convergence may be guaranteed with sufficiently small $h$ and $\tau$, while naturally demands a high computing cost. In additional, the gradient flow algorithm may get trapped in certain local minimizer basins regardless of how parameters are chosen~\cite{Bar17bilayer}. For instance, when $Z=-5I_2$ and $\Omega=(-5, 5)\times(-2, 2)$ in~\eqref{eq:plate}, the method may never reach the absolute minimizer, and could be trapped in a local minimizer basin. This phenomenon can affect or prevent the bilayer plate from complete folding~\cite{Alben:2011}. 

In the present work, we propose a novel approach using deep neural network to solve the model for the bilayer plate~\eqref{eq:plate}, which may potentially outperform the traditional methods in tackling the non-convex optimization problem. Instead of imposing the isometric constraint at specific degrees of freedom in the finite element space, as done in previous methods~\cite{Bar17bilayer,barThermal18,bartels2022stable}, we incorporate the constraint by penalizing the energy functional through the addition of the $L^2$-norm of the isometric constraint~\eqref{eq:isometry}. Furthermore, we construct the neural network functions that precisely satisfy the boundary condition in~\eqref{eq:bc} to accurately handle the boundary conditions. In order to overcome the challenges posed by the large spontaneous curvature and the domain with high aspect ratio, we propose a pre-training method, which trains on a series of nested subdomains inspired by the greedy algorithm. This pre-training deep learning method is also effective in dealing with irregularly shaped domains. 

We present the performance of the proposed method for examples from~\cite{Bar17bilayer,bonito2020discontinuous} as well as relevant engineer literatures~\cite{Simpson:2010,Alben:2011,Jan:2016}. Our experiments demonstrate that our method exhibits superior performance compared to previous methods; See, e.g.,~\cite{Bar17bilayer,barThermal18,bonito2020discontinuous,bartels2022stable}. For the benchmark case that the absolute minimizer is corresponds to a  cylindrical shape, i.e., Example~\ref{ex:a1}, the relative error of the energy is approximately $10^{-3}$ and the relative $L^2$-error of the minimizer is about $10^{-2}$. In contrast, algorithms based on finite element discretization and $H^2$-gradient flow minimization only reduce the relative error of the energy to $10^{-1}$. Furthermore, we successfully maintain the $L^2$-norm of the isometric constraint at approximately $10^{-2}$. This accurate preservation of the isometric constraint significantly enhances the overall solution accuracy. Last but not least, one notable advantage of our method is its ability to converge to an absolute minimizer. On the other hand, the gradient flow method often gets trapped in local minimizer basins, especially in cases involving large spontaneous curvature and O-shaped domains. 

The remaining parts of this paper are organized as follows. In~\S~\ref{sec:penalty}, we impose the isometric constraint~\eqref{eq:isometry} by the penalty method and discuss on the origin energy~\eqref{eq:plate} and the equivalent energy introduced in~\cite{bonito2020discontinuous,bartels2022stable}. In \S~\ref{sec:DL} we present a detailed description of the deep learning approach applied to the model of bilayer plate~\eqref{eq:plate} and introduce a novel pre-training procedure based on domain decomposition. Next, in \S~\ref{sec:example}, we present a series of numerical experiments to demonstrate the superiority of our method in simulating the large bending deformation of bilayer plates. Finally, in \S~\ref{sec:conclusion}, we conclude the work with a short summary.
\section{Total energies with penalty}\label{sec:penalty}
To deal with the isometric constraint, for $\beta>0$, we use the penalty method and consider a new energy functional as
\begin{equation}\label{eq:energy}
    \min_{u\in H^2_{\Gamma_D}(\Omega)}I[u]{:}=E[u]+\beta C[u]^2,
\end{equation}
with the $L^2$-norm of the isometric tolerance $C[u]=\nm{\nabla u^\top\nabla u-\Id_2}{L^2(\Omega)}$, and 
\[H^2_{\Gamma_D}(\Omega)=\set{u\in H^2(\Omega)}{u=g,\quad \nabla u=\Phi\quad\text{on\,}\Gamma_D}.\]
In this work, we always assume $f=0$ since the source term $f$ is not the main trouble in the computation. Then the energy $E[u]$ is positive; i.e. $E[u]\ge 0$, and $E[u]\le \abs{\Omega}\abs{Z}^2$ if $g$ is linear and $\Phi$ is a constant matrix, where $\abs{\Om}$ is the area of $\Om$. To balance the penalty term, a rigorous choice of $\beta$ is $\mc{O}(\abs{\Omega}\abs{Z}^2/\delta^2)$ for the expected tolerance $C[u]<\delta$. In practice, as long as the tolerance $C[u]$ is small enough, we can use a penalty factor $\beta$ that is not as large as it should be, and it is sufficient to obtain an approximating solution.

Some works~\cite{bonito2020discontinuous,bartels2022stable} have used the property $\abs{H(u)}=\abs{D^2u}$ when the isometric constraint~\eqref{eq:isometry} holds and the energy~\eqref{eq:plate} is reshaped into
 \[
 \Tilde{E}[u]=\dfrac12\int_\Omega\abs{D^2u}^2\dx+\sum_{i,j=1}^2\int_\Omega\partial_{ij}u\cdot(\partial_1u\times\partial_2u)Z_{ij}\dx+\dfrac12\abs{\Omega}\abs{Z}^2.
 \]
Then the variational problem~\eqref{eq:energy} changes to
 \begin{equation}\label{eq:anotherenergy}
\min_{u\in H^2_{\Gamma_D}(\Omega)}\Tilde{I}[u] {:}= \Tilde{E}[u] + \beta C[u]^2,
\end{equation}
this formulation is not energy positive-preserving and leads to at least an $\mc{O}(1/\beta)$ energy error as in
 \begin{proposition}~\label{prop:prop1}
 There exists $\Tilde{u}\in H^2_{\Gamma_D}(\Omega)$ such that $\Tilde{I}[\Tilde{u}] <\Tilde{I}[u]$ and %the error between $\abs{\Tilde{E}[u]-\Tilde{E}[\Tilde{u}]}$ and $\abs{\Tilde{I}[u]-\Tilde{I}[\Tilde{u}]}$ is at least $\mc{O}(1/\beta)$, 
 \[
 \Tilde{E}[u]-\Tilde{E}[\Tilde{u}]\ge\dfrac{\abs{\Omega}}{5\beta},
 \]
 and
 \[
 \Tilde{I}[u]-\Tilde{I}[\Tilde{u}]\ge\dfrac{4\abs{\Om}}{625\beta}\qquad\beta\ge 1,
 \]
 where $u$ is the solution of~\eqref{eq:plate}, and $\Omega,Z,g,\Phi$ is bounded independent of $\beta$.
 \end{proposition}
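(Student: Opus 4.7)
The starting point is that since $u$ solves~\eqref{eq:plate}, the isometric constraint holds exactly, so $C[u]=0$. Moreover, differentiating $\pa_iu\cdot\pa_ju=\delta_{ij}$ twice and using the symmetry of second derivatives shows that $\pa_i\pa_ju$ is purely normal for isometric immersions, i.e.\ $\pa_i\pa_ju=H_{ij}(u)\,n$, hence $|D^2u|^2=|H(u)|^2$ pointwise. Therefore $\tilde E[u]=E[u]$ and $\tilde I[u]=E[u]$, and the task reduces to producing an admissible $\tilde u$ whose $\tilde I$-energy is smaller than $E[u]$ by the stated amount.

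The plan is to take $\tilde u=u+tv$ for $v\in H^2(\Omega;\R^3)$ satisfying $v=0$ and $\na v=0$ on $\Gamma_D$ and a small parameter $t$ to be optimized. Taylor expansion around $u$ gives
\begin{align*}
\tilde E[\tilde u]&=\tilde E[u]+tA+t^2B+\mc{O}(t^3),\\
C[\tilde u]^2&=t^2 D+\mc{O}(t^3),
\end{align*}
where $A=\tilde E'[u](v)$, $B=\tfrac12\tilde E''[u](v,v)$, and $D=\int_\Omega\labs{(\na u)^\top\na v+(\na v)^\top\na u}^2\dx$. The linear-in-$t$ term of $C^2$ is absent because $C[u]=0$. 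Consequently $\tilde I[\tilde u]-\tilde I[u]=tA+t^2(B+\beta D)+\mc{O}(t^3)$, and the quadratic optimum $t^\star=-A/(2(B+\beta D))=\mc{O}(1/\beta)$ yields
\[
\tilde I[u]-\tilde I[\tilde u]=\frac{A^2}{4(B+\beta D)},\qquad
\tilde E[u]-\tilde E[\tilde u]=\frac{A^2(B+2\beta D)}{4(B+\beta D)^2},
\]
both of order $1/\beta$ up to the cubic remainder.

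The structural reason why $A$ can be made nonzero is that $u$ is only a critical point of $E$ on the isometric manifold, so its Euler--Lagrange equation carries a Lagrange multiplier for the constraint, whereas $\tilde E$ is being differentiated without any constraint. A direct calculation exploiting $\pa_i\pa_ju=H_{ij}n$ gives
\[
\tilde E'[u](v)-E'[u](v)=\int_\Omega \bigl(H(u):Z\bigr)\,\bigl[n\cdot(\pa_1v\times\pa_2u+\pa_1u\times\pa_2v)\bigr]\dx,
\]
a cross term arising from the variation of the Jacobian $|\pa_1u\times\pa_2u|$, which is generically not cancelled by the Lagrange-multiplier piece of $E'[u](v)$ for admissible $v$.

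The main obstacle is extracting the precise universal constants $1/5$ and $4/625$, which depend only on $|\Omega|$ and not on $u,Z,g,\Phi$. This calls for an explicit test direction $v$---the natural candidate being $v=\chi\,n$ with $\chi$ a cutoff supported in $\Omega\setminus\Gamma_D$---so that $A$, $B$, $D$ can be bounded in terms of $|\Omega|$ alone via the a priori bound $0\le E[u]\le|\Omega||Z|^2$ recalled just before the proposition. The hypothesis $\beta\ge1$ in the $\tilde I$-bound is what lets the leading $\mc{O}(1/\beta)$ term dominate the cubic remainders of size $\mc{O}(1/\beta^3)$ and the quadratic-in-$t^\star$ correction of size $\mc{O}(1/\beta^2)$, which is why the $\tilde I$-constant $4/625$ is strictly smaller than the $\tilde E$-constant $1/5$.
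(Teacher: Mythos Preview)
Your proposal misreads the proposition as a statement about an \emph{arbitrary} configuration $(\Omega,Z,g,\Phi)$ and an arbitrary constrained minimizer $u$, and then tries to manufacture a perturbation $\tilde u=u+tv$ that works uniformly.  The proposition, however, is an \emph{existence} statement: it asserts that \emph{some} admissible choice of data (with bounds independent of $\beta$) produces a $\tilde u$ beating $u$ by the displayed margins.  The paper proves it by exhibiting a single explicit example: take $Z$ with $Z_{11}=1$ and all other entries zero, so that the constrained minimizer is the unit cylinder $u=(\sin x_1,\,x_2,\,\cos x_1)$ with $\tilde E[u]=\tilde I[u]=0$, and then set
\[
\tilde u=\Bigl(\sin x_1,\ \bigl(1+\tfrac{1}{5\beta}\bigr)x_2,\ \cos x_1\Bigr).
\]
A two-line computation gives $\tilde E[\tilde u]=-\abs{\Omega}/(5\beta)$ and $C[\tilde u]=(10\beta+1)\sqrt{\abs{\Omega}}/(25\beta^2)$, from which both inequalities follow directly.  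The constants $1/5$ and $4/625$ are artifacts of this particular choice, not universal bounds.

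Because you aim at the general case, you are forced into a Taylor expansion with unspecified coefficients $A,B,D$ and a cutoff $\chi$ in $v=\chi n$, and you yourself identify ``extracting the precise universal constants'' as the main obstacle---which it is, and which your outline does not overcome.  Note also that the paper's effective perturbation direction is \emph{tangential} ($v=(0,x_2,0)=x_2\,\partial_2u$), not normal; the mechanism is that stretching the ruling direction makes $\partial_1\tilde u\times\partial_2\tilde u$ longer than unit length, so the cross term in $\tilde E$ overcompensates the Hessian term.  Your qualitative reasoning about why $A\ne 0$ (the Jacobian variation is not cancelled by the Lagrange multiplier) is sound, but the route to the stated constants is the explicit example, not a general variational argument.
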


 \begin{proof}
 For the solution $u$ that minimize the energy~\eqref{eq:plate} under isometric constraint~\eqref{eq:isometry}, it is straightforward that $\Tilde{I}[u]=I[u]\ge 0$. Consider the spontaneous curvature $Z$ with $Z_{11}=1$ and $Z_{ij}=0$ otherwise. Let
  \[\Tilde{u}=\left[\sin x_1,\lr{1+\dfrac1{5\beta}}x_2,\cos x_1\right]^\top,\]
  then a direct calculation gives
  \[\nabla\Tilde{u}=\begin{bmatrix}
      \cos x_1 & 0\\
      0 & 1+1/(5\beta)\\
      -\sin x_1 & 0 
  \end{bmatrix},\qquad \partial_{11}\Tilde{u}=\begin{bmatrix}
      -\sin x_1\\
      0\\
      -\cos x_1
  \end{bmatrix},\]
  and $\partial_{ij}\Tilde{u}=0$ otherwise. Therefore $\partial_1\Tilde{u}\times\partial_2\Tilde{u}=-[1+1/(5\beta)]\partial_{11}\Tilde{u}$ and
  \[\Tilde{E}[\Tilde{u}]=-\dfrac{\abs{\Omega}}{5\beta},\qquad C[\Tilde{u}]=\dfrac{10\beta+1}{25\beta^2}\sqrt{\abs{\Omega}}.\]
  Since 
  \[\Tilde{I}[\Tilde{u}]=\Tilde{E}[\Tilde{u}]+\beta C[\Tilde{u}]^2\le-\dfrac{4\abs{\Omega}}{625\beta},\] when $\beta\ge 1$,  we conclude that the lower bound of the energy error is at least $\mc{O}(1/\beta)$.
 \end{proof}

In practice, we always choose $\beta$ between $100-1000$. Otherwise the gradient descent algorithms will converge too slowly if $\beta$ is too large. The following proposition shows that the minimizer of the problem~\eqref{eq:anotherenergy} may be completely wrong, even if the tolerance error is small enough.%, which is not tolerated.
\begin{proposition}~\label{prop:prop2}
For fixed penalty factor $\beta$ and some $\gamma>1/4$, there exists $\Tilde{u}\in H^2_{\Gamma_D}(\Omega)$ such that $\Tilde{I}[\Tilde{u}] <\Tilde{I}[u]$ and the tolerance $C[\Tilde{u}]$ is $\mc{O}(\beta^{-2\gamma})$, while 
%the error between $\abs{\Tilde{E}[u]-\Tilde{E}[\Tilde{u}]}$ and $\abs{\Tilde{I}[u]-\Tilde{I}[\Tilde{u}]}$ is at least $\mc{O}(1)$, 
\[
\Tilde{E}[u]-\Tilde{E}[\Tilde{u}]\ge\abs{\Omega},
\]
and
\[
\Tilde{I}[u]-\Tilde{I}[\Tilde{u}]>(1-9\beta^{1-4\gamma})\abs{\Omega}\ge0
\]
provided that $\beta\ge 9^{1/(4\gamma-1)}$, where $u$ is the solution of~\eqref{eq:plate}, and $\Omega,g,\Phi$ is bounded independent of $\beta$ while $\abs{Z}=\beta^\gamma$.
\end{proposition}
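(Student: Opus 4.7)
The plan is to imitate the construction in Proposition~\ref{prop:prop1} but rescaled to absorb the factor $|Z|=\beta^\gamma$, which is now allowed to grow with $\beta$. First I would choose the rank-one curvature $Z$ with $Z_{11}=\beta^\gamma$ and all other entries zero, so that $|Z|=\beta^\gamma$ as required. Then, mimicking the cylindrical competitor of Proposition~\ref{prop:prop1} but rescaled in the bending direction to match the spontaneous curvature, I would test with
\[
\Tilde{u}(x)=\beta^{-\gamma}\bigl[\sin(\beta^\gamma x_1),\,(1+a)\beta^\gamma x_2,\,\cos(\beta^\gamma x_1)\bigr]^\top,
\]
where $a>0$ is a small parameter to be chosen. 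A direct differentiation, entirely parallel to the one in the proof of Proposition~\ref{prop:prop1}, yields $|D^2\Tilde{u}|^2=\beta^{2\gamma}$ and $\partial_1\Tilde{u}\times\partial_2\Tilde{u}=-(1+a)\beta^{-\gamma}\partial_{11}\Tilde{u}$, from which only the $(i,j)=(1,1)$ cross term survives in $\Tilde{E}[\Tilde{u}]$ and a short computation gives
\[
\Tilde{E}[\Tilde{u}]=-a|\Omega|\beta^{2\gamma},\qquad C[\Tilde{u}]^2=a^2(2+a)^2|\Omega|.
\]

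The key tuning is $a=\beta^{-2\gamma}$. With this choice the cylindrical-like competitor achieves the $\beta$-independent gain $\Tilde{E}[\Tilde{u}]=-|\Omega|$, while the isometry defect is $C[\Tilde{u}]=\mc{O}(\beta^{-2\gamma})$ and the penalty contribution satisfies $\beta C[\Tilde{u}]^2\le 9\beta^{1-4\gamma}|\Omega|$ as soon as $\beta\ge 1$ so that $(2+\beta^{-2\gamma})^2\le 9$. Since the minimizer $u$ of~\eqref{eq:plate} satisfies the isometric constraint exactly, we have $C[u]=0$ and thus $\Tilde{I}[u]=\Tilde{E}[u]=E[u]\ge 0$. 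Combining these three facts immediately yields
\[
\Tilde{E}[u]-\Tilde{E}[\Tilde{u}]\ge |\Omega|,\qquad \Tilde{I}[u]-\Tilde{I}[\Tilde{u}]\ge (1-9\beta^{1-4\gamma})|\Omega|,
\]
and the hypothesis $\beta\ge 9^{1/(4\gamma-1)}$ is precisely what is needed to force the second right-hand side to be non-negative.

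There is no real obstacle in the argument once the scaling is identified: the whole proof is a guided calculation. The only delicate point is balancing the competing orders, namely the energy gain $a|\Omega|\beta^{2\gamma}$ against the penalty cost $\beta a^2|\Omega|$; setting these two scales equal gives exactly $a\sim\beta^{-2\gamma}$, and the assumption $\gamma>1/4$ is what guarantees $\beta^{1-4\gamma}\to 0$ as $\beta\to\infty$, so that the penalty cost is overwhelmed by the constant-order energy gain. Admissibility of $\Tilde{u}$ is handled by choosing the boundary data $g=\Tilde{u}|_{\Gamma_D}$ and $\Phi=\nabla\Tilde{u}|_{\Gamma_D}$, which are bounded uniformly in $\beta$ since $\sin$ and $\cos$ are bounded and $|a|\le 1$, in agreement with the hypothesis that $\Omega,g,\Phi$ are bounded independently of $\beta$.
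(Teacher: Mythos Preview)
Your proposal is correct and follows essentially the same approach as the paper: the paper also takes $Z_{11}=\beta^\gamma$ and the competitor $\Tilde{u}=[\beta^{-\gamma}\sin(\beta^\gamma x_1),\,(1+\beta^{-2\gamma})x_2,\,\beta^{-\gamma}\cos(\beta^\gamma x_1)]^\top$, which is exactly your construction with $a=\beta^{-2\gamma}$ plugged in from the start, and then performs the identical calculation leading to $\Tilde{E}[\Tilde{u}]=-|\Omega|$, $C[\Tilde{u}]=\beta^{-2\gamma}(2+\beta^{-2\gamma})\sqrt{|\Omega|}$, and $\Tilde{I}[\Tilde{u}]<(9\beta^{1-4\gamma}-1)|\Omega|$. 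Your introduction of the free parameter $a$ before tuning it is a nice expository touch that makes the scaling balance explicit, but the underlying argument is the same.
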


\begin{proof}
Consider the spontaneous curvature $Z$ with $Z_{11}=\beta^\gamma$ and $Z_{ij}=0$ otherwise. Let
  \[\Tilde{u}=\left[\beta^{-\gamma}\sin(\beta^\gamma x_1),(1+\beta^{-2\gamma})x_2,\beta^{-\gamma}\cos(\beta^\gamma x_1)\right]^\top.\]
Proceeding along the same line as Proposition~\ref{prop:prop1}, a direct calculation gives
  \[\nabla\Tilde{u}=\begin{bmatrix}
      \cos(\beta^\gamma x_1) & 0\\
      0 & 1+\beta^{-2\gamma}\\
      -\sin(\beta^\gamma x_1) & 0 
  \end{bmatrix},\qquad \partial_{11}\Tilde{u}=-\beta^\gamma \begin{bmatrix}
      \sin(\beta^\gamma x_1)\\
      0\\
      \cos(\beta^\gamma x_1)
  \end{bmatrix},\]
  and $\partial_{ij}\Tilde{u}=0$ otherwise. Therefore $\partial_1\Tilde{u}\times\partial_2\Tilde{u}=-\beta^{-\gamma}(1+\beta^{-2\gamma})\partial_{11}\Tilde{u}$ and
  we obtain
\[\Tilde{E}[\Tilde{u}]=-\abs{\Omega},\qquad C[\Tilde{u}]=\beta^{-2\gamma}(2+\beta^{-2\gamma})\sqrt{\abs{\Omega}}.\]
Since 
\[\Tilde{I}[\Tilde{u}]=\Tilde{E}[\Tilde{u}]+\beta C[\Tilde{u}]^2<(9\beta^{1-4\gamma}-1)\abs{\Omega}\] and is $\mc{O}(1)$ for large $\beta$,  we conclude that the lower bound of the energy error is at least $\mc{O}(1)$ if $\abs{Z}$ is $\mc{O}(\beta^\gamma)$.
\end{proof}

Based on the above discussion, we shall use the total energy~\eqref{eq:energy} in the following work, which is much stable than~\eqref{eq:anotherenergy} when the spontaneous curvature $Z$ is large.
\section{Deep Learning with pre-training}\label{sec:DL}
In the framework of deep learning, we use Stochastic Gradient Descent (SGD) to minimize the total energy~\eqref{eq:energy} over the trial set $\H\subset H^2_{\Gamma_D}(\Omega)$ modeled by ResNet~\cite{he2016deep}; i.e., $\min I[\Hat{u}]$ for $\Hat{u}\in\H$, here $\H$ the set with neural network functions will be declared specifically later. The ResNet we use is plot in Figure~\ref{Fig.resnet}, which consists of a fully connected input layer, $l$ residual blocks and a fully connected output layer. Each residual block has two fully connected layers with $m$ hidden nodes. 
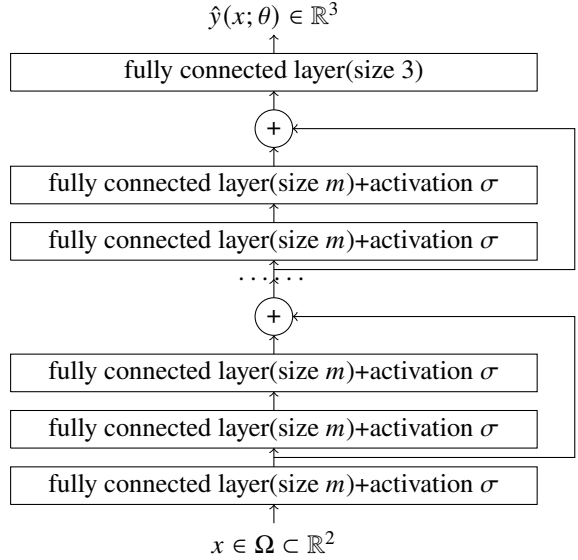
\begin{figure}[htbp]\centering\begin{tikzpicture}
  \node at(0,0) {$x\in\Omega\subset\R^2$};
  \draw[->](0,0.25)--(0,0.5);
  \draw(-3.5,0.5)--(3.5,0.5)--(3.5,1)--(-3.5,1)--(-3.5,0.5);
  \node at(0,0.75){fully connected layer(size $m$)+activation $\sigma$};
  \draw[->](0,1)--(0,1.25);
  \draw(-3.5,1.25)--(3.5,1.25)--(3.5,1.75)--(-3.5,1.75)--(-3.5,1.25);
  \node at(0,1.5){fully connected layer(size $m$)+activation $\sigma$};
  \draw[->](0,1.75)--(0,2);
  \draw(-3.5,2)--(3.5,2)--(3.5,2.5)--(-3.5,2.5)--(-3.5,2);
  \node at(0,2.25){fully connected layer(size $m$)+activation $\sigma$};
  \draw[->](0,2.5)--(0,2.75);
  \draw(0,3)circle[radius=0.25];
  \node at(0,3){+};
  \draw[->](0,1.125)--(4,1.125)--(4,3)--(0.25,3);
  \draw[->](0,3.25)--(0,3.5);
  \node at(0,3.5){……};
  \draw[->](0,3.5)--(0,3.75);
  \draw(-3.5,3.75)--(3.5,3.75)--(3.5,4.25)--(-3.5,4.25)--(-3.5,3.75);
  \node at(0,4){fully connected layer(size $m$)+activation $\sigma$};
  \draw[->](0,4.25)--(0,4.5);
  \draw(-3.5,4.5)--(3.5,4.5)--(3.5,5)--(-3.5,5)--(-3.5,4.5);
  \node at(0,4.75){fully connected layer(size $m$)+activation $\sigma$};
  \draw[->](0,5)--(0,5.25);
  \draw(0,5.5)circle[radius=0.25];
  \node at(0,5.5){+};
  \draw[->](0,3.625)--(4,3.625)--(4,5.5)--(0.25,5.5);
  \draw[->](0,5.75)--(0,6);
  \draw(-3.5,6)--(3.5,6)--(3.5,6.5)--(-3.5,6.5)--(-3.5,6);
  \node at(0,6.25){fully connected layer(size $3$)};
  \draw[->](0,6.5)--(0,6.75);
  \node at(0,7){$\Hat{y}(x;\theta)\in\R^3$};
\end{tikzpicture}
\caption{The component of ResNet.}\label{Fig.resnet}
\end{figure}
\subsection{Boundary condition}
There are several ways to impose the boundary conditions in the variational problem, such as the penalized boundary condition in~\cite{EYu:2018,Sirignano:2018}, the one based on the Nitsche variational principle~\cite{LiaoMing:2021} and the {\em exact} boundary condition~\cite{berg2018unified,Yang:2021}, among many other things. We follow~\cite{berg2018unified} and construct two functions 
$g_1(x): \Om\to\R$ and $g_2(x):\Om\to\R^3$  
\begin{equation}\label{eq:g1}\begin{aligned}
      g_1|_{\Gamma_D} = 0, \quad g_1|_{\Omega\backslash\Gamma_D} \neq 0&\quad \na g_1|_{\Gamma_D} = 0,\\
      \quad g_2|_{\Gamma_D} = g, &\quad \na g_2|_{\Gamma_D} = \Phi,
\end{aligned}\end{equation}
satisfying the clamped boundary conditions. The trial set $\Hat{u}\in\H$ is defined as
\[
\hat{u}(x; \theta){:}= g_1(x)\hat{y}(x;\theta) + g_2(x)
\]
such that
\[
\hat{u} = g, \quad\na_x \hat{u} = \Phi\quad \text{on} \ \Gamma_D,
\]
where $\hat{y}(x;\theta):\Om\to\R^3$ is the output of ResNet in Figure~\ref{Fig.resnet}. For example, when the plate is clamped along the edge $x_1=0$, we use $g_1=x_1^2$ and
\[
  \hat{u}(x; \theta)  = x_1^2\hat{y}(x;\theta) + \begin{bmatrix}
    x_1\\
    x_2\\
    0
  \end{bmatrix},
\]
to satisfy the clamped boundary condition with
\[
  g= 
  \begin{bmatrix}
    0\\
    x_2\\
    0
  \end{bmatrix}, \qquad
  \Phi =
  \begin{bmatrix}
    1&0\\
    0&1\\
    0&0
  \end{bmatrix}.
\]

The boundary functions may not be so easily to construct in some cases, we may employ the neural networks to approximate $g_1$ and $g_2$ as~\cite{berg2018unified}, in which $g_1$ is constructed to approximate the distance function to the clamped boundary. This may cause difficulties since the distance function is not smooth enough, thus~\cite{Yang:2021} employs the radial basis interpolation on a group of points on some segments of the boundary. We combine the advantages of both approaches by training $g_1$ that satisfies
\[
     g_1|_{\Gamma_D} = 0, \quad \na g_1|_{\Gamma_D} = 0, \quad g_1|_{\partial\Omega\backslash\Gamma_D} = d(x),
\]
where $d(x)>0$ and $\abs{\partial_t d(x)}\sim\mc{O}(1)$, which mimics the distance function. The effect of this approach will be demonstrated in Example~\ref{ex:oc}.
\subsection{Loss function}
In each epoch, we randomly sample $N_i$ points $\{x_k\}$ in $\Omega$, and use Monte Carlo method to approximate the integration, i.e.,
\[
\int_{\Om} f(x) \dx=\lim _{N \to\infty} \dfrac{\abs{\Om}}{N} \sum_{k=1}^N f(x_k).
\]
We define the loss function $I^*_{\Omega}[\hat{u}]$ as
\begin{equation}\label{loss function}
\begin{split}
I^*_{\Omega}[\hat{u}]:=\frac{\abs{\Om}}{N_i} \sum_{k=1}^{N_i}&\bigg(\frac{1}{2}
\abs{H(\hat{u}(x_k;\theta))+Z }^2 - f(x_k)\hat{u}(x_k;\theta)\\
&\quad +\beta\abs{\na_x\hat{u}(x_k;\theta)^\top\na_x\hat{u}(x_k;\theta) -\Id_2}^2\bigg),
\end{split}
\end{equation} 
where is the batch size of $i$-th iteration.
\subsection{Pre-training}
Numerical experiments conducted in previous studies such as~\cite{Bar17bilayer,barThermal18,bonito2020discontinuous,bartels2022stable} highlighted the challenges associated with solving the bilayer model under certain isometry constraint. The factors such as a large spontaneous curvature $Z$, the domain $\Om$ with high aspect ratio, and the non-convex O-shape domain significantly increase the difficulty. These phenomena indicate that the degree of non-convexity of the energy functional $E[u]$ is closely related to properties of the domain $\Omega$. Motivated by these observations, we propose an algorithmic improvement by dividing the domain $\Omega$ into a series of subdomains $\{\Omega_i\}_{i=1}^n$: $\Gamma_D \subset \Omega_1 \subset \dots \subset\Omega_n = \Omega$. The idea is to expect that minimizing $I^*_{\Omega_i}[\Hat{u}]$ on the subdomain $\Omega_i$ becomes easier than minimizing $I^*_{\Omega_{i+1}}[\Hat{u}]$ on the subdomain $\Omega_{i+1}$. We will present numerical experiments in the next section to validate the effectiveness of this pre-training method on reducing the number of iterations.

Denote by $D=\text{conv}\Omega$ the convex hull of $\Omega$, we divide $D$ into a series of convex subdomains $\{D_i\}_{i=1}^n:\Gamma_D\subset D_1\subset\dots\subset D_n=D$, and then select each subdomain as $\Omega_i=D_i\cap\Omega$. The objective is to train a neural network that minimizes the functional~\eqref{loss function} successively, starting with $\Omega_1$, and subsequently using the well-tuned parameters of the neural network as the initial values for training procedure on $\Omega_2$, and so forth. This process continues until the initial parameters of the neural network for training on the entire domain $\Omega$ are obtained. Algorithm~\ref{pre-training} provides a detailed description of this approach.
\begin{algorithm}[htpb]
    \caption{Deep learning with pre-training}
    \label{pre-training}
    \begin{algorithmic}[1]
      \REQUIRE The number of subdomains $n$, the penalty parameter $\beta$, max pre-training iteration number $Epoch_{pre}$ on each subdomain,  max training iteration number $Epoch$ on domain $\Omega$.  
            \STATE $D=\text{conv}\Omega$.
            \STATE Divide $D$ into a series of convex subdomains $\{D_i\}_{i=1}^n:\Gamma_D\subset D_1\subset\dots\subset D_n=D$.  
            \STATE $\Omega_i=D_i\cap\Omega$ for $i=1,\dots,n$. 
      \FOR{$i = 1, \dots, n - 1$}
      \STATE  Train the network $Epoch_{pre}$ times on subdomain $\Omega_i$ with loss function $  I^*_{\Omega_i}[\hat{u}]$~ \eqref{loss function}.
      \ENDFOR
      \STATE  Train the network $Epoch$ times on domain $\Omega$ with loss function $ I^*_{\Omega}[\hat{u}]$~\eqref{loss function}.
    \end{algorithmic}
\end{algorithm}%

This algorithm may be understood to solve problems from the perspective of information flow within subdomains. In this algorithm, the values of $\Hat{u}$ within the subdomain $\Omega_i$ are treated as information. Initially, the problem only has boundary conditions on $\Gamma_D$ as the available information. Therefore we start by seeking information on  $\Omega_1$, which contains $\Gamma_D$. Once we have the information on $\Omega_i$, we apply the concept of the greedy algorithm to expand the information to $\Omega_{i+1}$. By applying this process for each subdomain, we gradually gather more and more information until we have covered the entire domain $\Omega$.
\section{Numerical examples}\label{sec:example}
We use ResNet with five residual blocks and each block consists of two fully connected layers with ten hidden nodes; See Figure~\ref{Fig.resnet}, which has around $1050$ parameters. In our numerical experiments, such a small network is easy to train and the accuracy is satisfactory. It seems unnecessary to construct a larger network. 

We choose a smooth activation function \(\sigma(x) = \text{tanh}x\). The model is trained by the Adam optimizer~\cite{adam2014} with a fixed learning rate $0.001$. We set the maximum number of iterations $Epoch$ as $1e6$, and set the batch size as $16*\abs{\Om}$ unless otherwise stated, which may take a few hours to train.
  
We decompose the loss function $I^{\ast}[\hat{u}]$ into two parts, energy $E[\hat{u}]$ and $L^2$-norm of the isometry tolerance $C[\hat{u}]$, 
\[
I^{\ast}[\hat{u}]  = E[\hat{u}] + \beta C[\hat{u}]^2.
\]
During the test process, we uniformly sample $1e4*|\Omega|$ points to approximate $E[\hat{u}]$ and $C[\hat{u}]$. Moreover, we report the relative $L^2$-error 
\[
    e_{L^2} = \dfrac{\|u - \hat{u}\|_{L^2(\Omega)}}{\|u\|_{L^2(\Omega)}},
\]
 whenever the global minimizer $u$ is available.
 \subsection{Clamped plate: isotropic curvature}
 We begin with the example with an isotropic spontaneous curvature $Z=-\alpha\,\Id_2$ whose absolute minimizer is proved to be a cylinder of radius $1/\alpha$ in~\cite{schmidt2007minimal}. The plate $\Om=(-5,5)\x (-2,2)$ is clamped on the left side $\Gamma_D=\{-5\} \times[-2,2]$ with the boundary condition
\[
u= \begin{bmatrix}
      -5\\
            x_2\\
            0
  \end{bmatrix}_{3\times 1}, \quad \nabla u=
\begin{bmatrix}
    \textup{Id}_{2}\\
    0
\end{bmatrix}_{3\times 2} \quad \text { on } \Gamma_D.
\]
It follows from~\cite{schmidt2007minimal} that the problem has an absolute minimizer
\begin{equation}\label{exact sol}
u(x_1, x_2) = \left[\dfrac{1}{\alpha}\sin(\alpha(x_1 + 5)) - 5,\quad x_2,\quad \frac{1}{\alpha}\lr{1 - \cos(\alpha(x_1 + 5))}\right]^{\top},
\end{equation}
with energy of 
\begin{equation}
    E[u]=20\alpha^2.
\end{equation} 
We impose the boundary condition on $\hat{u}(x;\theta)$ by  
\[
\hat{u}(x; \theta) = (x_1 + 5)^2 \hat{y}(x;\theta) + \begin{bmatrix}
      x_1\\
      x_2\\
      0
\end{bmatrix}. 
\]

We test $\alpha=1, 2.5, 5$ and $10$. A larger $\alpha$ implies a smaller radius of the cylinder, resulting in increased difficulty in bending the plate. In~\cite{Bar17bilayer}, the author employed the DKT element and the $H^2$-gradient flow approach to solve this problem for $\alpha=1$ and $\alpha=5$. Their methodology achieves a cylindrical shape with an energy of $18.062$ when $\alpha= 1$, although the theoretical optimal energy is $20$. For $\alpha = 5$, the authors in~\cite{Bar17bilayer} discovered only an equilibrium configuration known as the {\em dog ears}. In a subsequent work~\cite{bonito2020discontinuous}, the authors exploited dG discretization and an $H^2$-gradient flow to simulate the same problem. They obtained a cylinder with energy $18.891$ when $\alpha = 1$, which slightly surpasses the results in~\cite{Bar17bilayer}. Furthermore, the authors in~\cite{bartels2022stable} simulated the same problem for $\alpha = 2.5$, resulting in a cylindrical shape with energy $80.461$, whereas the absolute minimum energy is $125$ for $\alpha=2.5$. 
 \begin{example}\label{ex:a1}
 $Z=-\textup{Id}_2$.
 \end{example}
This serves as the most rudimentary case for a bilayer plate, as delineated earlier. The absolute minimizer is a cylinder of radius $1$ with energy $20$. The authors in~\cite{Bar17bilayer} obtained this configuration asymptotically by DKT-based discretization and an $H^2$-gradient flow algorithm. By exploiting the dG discretization, the authors in~\cite{bonito2020discontinuous} also found the absolute minimizer with higher energy accuracy compared to DKT approach in~\cite{Bar17bilayer}. We list the results in these papers in Table~\ref{tab.compareDGDKT} for the sake of comparison.
\begin{table}[http]
    \centering
    \begin{tabular}{ccccccccccccc}
      \toprule
      \text { Mesh } & &\#4 & && & \# 5 & && &\#6 & \\
      \midrule
      \text { Method } & dG & & DKT& & dG &  &DKT& & dG & & DKT \\
          \text{DoFs} & 7680 & & 2601& & 30720 &  &9801& & 122880 & & 38025 \\
      \text { Energy } &18.514&&15.961& &18.679 & & 16.544& & 18.891 & &No Convergence\\
      \bottomrule
\end{tabular}
\caption{The final energy and degree of freedoms of DKT~\cite{Bar17bilayer} and dG~\cite{bonito2020discontinuous} on the meshes with different level of uniform refinements.}\label{tab.compareDGDKT}
\end{table}

We display the pseudo-evolution of the bilayer towards the absolute minimizer in Figure~\ref{fig:a1deformation}, which shows that the plate rolls into a cylinder of radius $1$ with energy $20.15$ after $2e5$ steps of training. This confirms the results in~\cite{schmidt2007minimal,Bar17bilayer,bonito2020discontinuous}. 
\begin{figure}[htbp]
\centering
\includegraphics[width=0.9\linewidth]{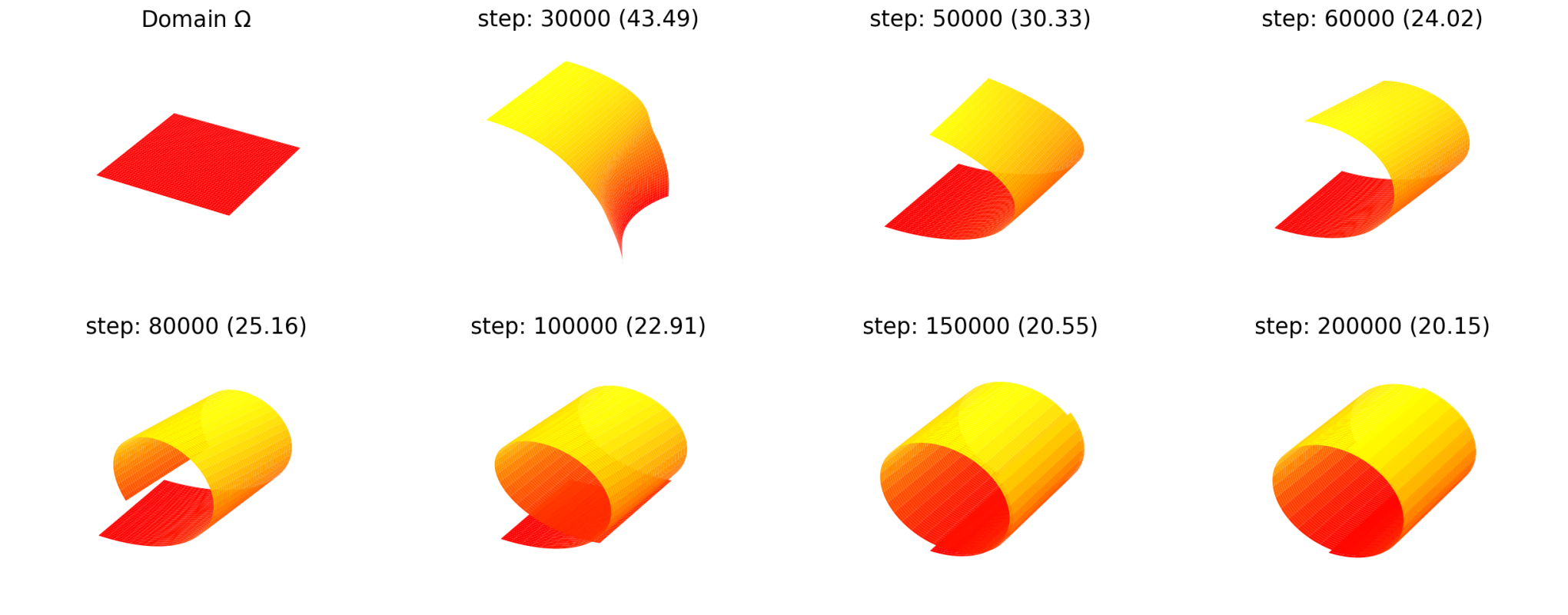}
 \caption{$\alpha = 1$ and $\beta = 500$. The pseudo-evolution of the bilayer plate reaches the absolute minimizer with energy $20$. The number of the iteration steps and the corresponding energy are reported.}\label{fig:a1deformation}
\end{figure}
\begin{table}[http]
    \centering
  \begin{tabular}{ccccc}
        \toprule
        $\beta$ & $E[\hat{u}]$ & $C[\hat{u}]$ & $e_{L^2}$ &Cylindrical shape\\
        \midrule
        100 & 17.21 & 1.14e-1 & 7.08e-2& Y\\
        500  & 20.03 & 2.58e-2& 4.42e-2&Y\\
        1000   & 20.15&  2.36e-2&5.54e-2& Y\\
        \bottomrule
    \end{tabular}
\caption{Test error, $\alpha = 1$, we directly train the network for $2e5$ steps with different $\beta$, and report the energy, $L^2$-isometric error, $L^2$-relative error and whether the plate reaches a cylindrical shape. The total number of the parameters of the neural network is $1050$.}\label{tab.eg1dl}
\end{table}
In view of Table~\ref{tab.compareDGDKT} and Table~\ref{tab.eg1dl}, our method admits an relative energy error of $1.50e-3$, which is an order of magnitude improvement compared to the previous best result of $5.55e-2$ reported in~\cite{bonito2020discontinuous}.

In our approach, we need to select the appropriate penalty factor $\beta$. The larger the value of the penalty factor $\beta$, the higher the accuracy, but the number of training steps increases accordingly. $\beta=500$ or $\beta=1000$ seems a good choice. We shall leave more discussion on the selection of $\beta$ for more complicated examples later on, and we will show that when the pre-training algorithm is applied, the penalty factor $\beta$ may be chosen relatively small while the algorithm still achieves high accuracy.
\begin{example}
$Z=-2.5\textup{Id}_2$.
\end{example}

\textsc{Schmidt}~\cite{schmidt2007minimal} has shown that the global minimizer of~\eqref{eq:plate} for this problem is given by a cylinder of height $4$ and radius $0.4$ with an energy $125$. We test the method with penalty factors $\beta$ varying from $10$ to $5000$, and train the network directly without pre-training. The pseudo-evolution configurations of the deformed plate are plot in Figure~\ref{fig:a2p5deformation}. It is clear that the cylindrical shape is reached after $3e5$ steps iteration. The resulting plate energy is $125.03$, which is very close to the theoretical minimal energy $125$, and is much better than the energy $107.05$ reported in~\cite{bartels2022stable}.
\begin{figure}[htbp]
\centering
\includegraphics[width=0.9\linewidth]{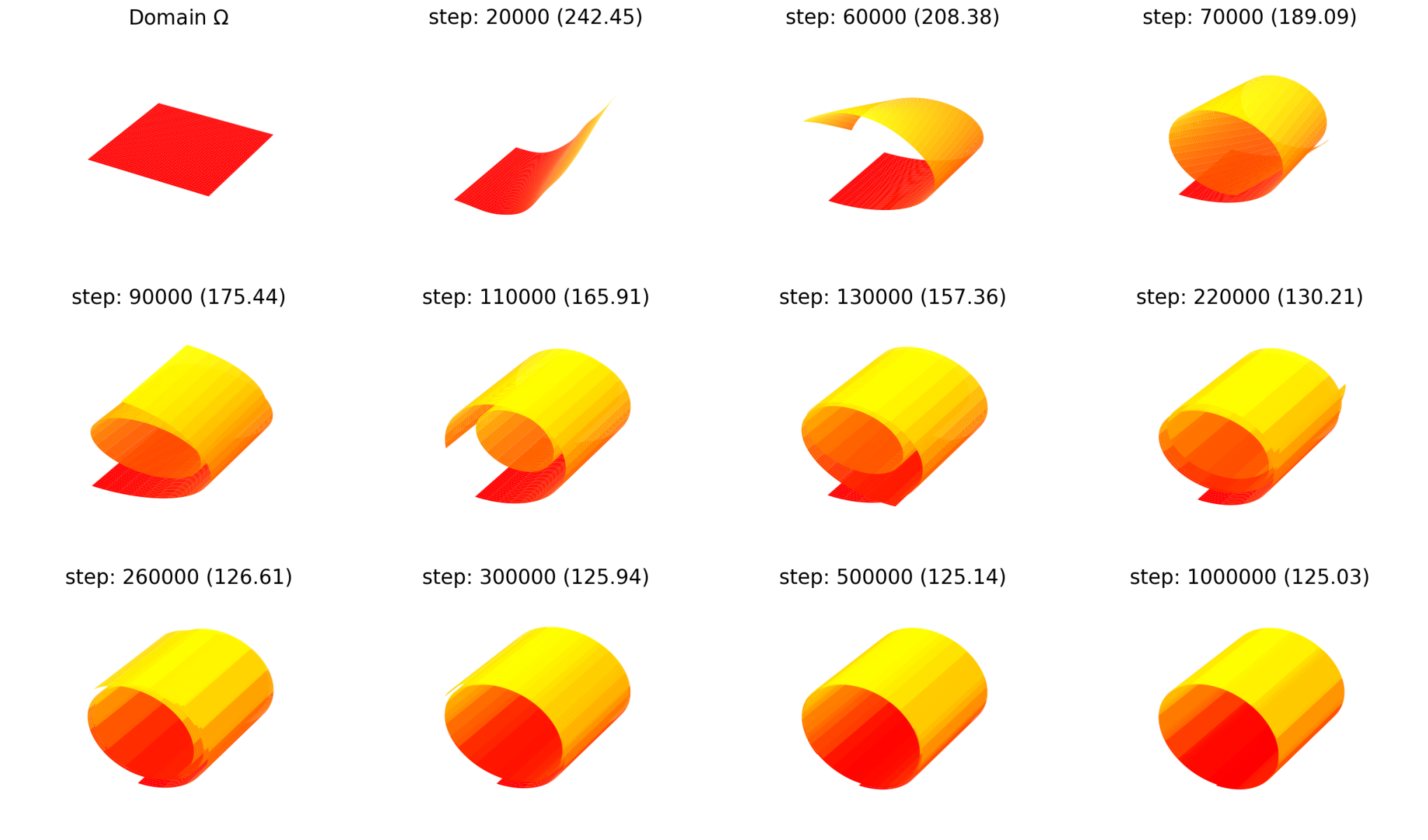}
\caption{$\alpha = 2.5$ and $\beta = 1000$. We train the network directly and plot the evolution of the bilayer plate, and report
the number of the iteration step and the corresponding energy. After about $3e5$ SGD iteration steps, the plate reaches a cylindrical shape.}\label{fig:a2p5deformation}
\end{figure}
  
The test errors are shown in Table~\ref{tab.finalA2p5}. We have reached the cylindrical shape for all $\beta$. A small penalty factor $\beta$ gives a small energy and a large isometric error, while the relative $L^2$-error seems not reasonably accurate. When $\beta=500$, we get  the final energy $125.01$, $L^2$-isometric error $2.44e-2$, and the relative error $4.81e-3$, which are much more accurate than the same results reported in~\cite{bartels2022stable}.
%We find that the larger the $\beta$ the lower the isometric error $C[\hat{u}]$, which usually leads to a smaller relative $L^2$-error $e_{L^2}$ when the sufficiently large number of iteration are performed.
%
\begin{table}[http]
    \centering
    \begin{tabular}{ccccc}
        \toprule
        $\beta$ & $E[\hat{u}]$ & $C[\hat{u}]$ & $e_{L^2}$ &Cylindrical shape\\
        \midrule
        10 & 102.05& 6.94e-1 & 6.39e-2& Y\\
        100 & 112.36& 1.80e-1 & 4.03e-2& Y\\
        500  & 125.01 & 2.44e-2& 4.81e-3&Y\\
        1000   & 125.03&  1.76e-2& 1.55e-2& Y\\
        5000   & 125.69 &1.61e-2& 7.82e-2 &Y\\
        \bottomrule
    \end{tabular}
\caption{Test error, $\alpha= 2.5$. We report the energy, $L^2$-isometric error, $L^2$-relative error and whether the plate reaches a cylindrical shape after $1e6$ iteration.}\label{tab.finalA2p5}
\end{table}
\begin{example}
$Z = -5\textup{Id}_2$.
\end{example}

\textsc{Schmidt}~\cite{schmidt2007minimal} has shown that the global minimizer of~\eqref{eq:plate} for this problem is given by a cylinder of height $4$ and radius $0.2$ with an energy of $500$. This example has been test in~\cite{Bar17bilayer}, in which an equilibrium configuration likes {\em dog ears} has been presented.

We illustrate the pseudo-evolution of the plate in Figure~\ref{fig:a5deformation}. The plate rolls to a cylindrical shape after $1.5e6$ iteration. It is noticeble that the equilibrium configurations similar to {\em dog ears} appear as the intermediate states when the iteration steps vary from $1.8e5$ to $3.0e5$. In view of Figure~\ref{fig:a5deformation}, Figure~\ref{fig.traina5} and Figure~\ref{fig. testa5}, the energy drops very slowly when the number of the iteration step is larger than $5e5$, while the deformation and the relative $L^2$-error change dramatically. As reported in~\cite{Bar17bilayer}, the energy corresponding to the {\em dog ears} is $518.897$, which is not far from the theoretical minimal energy $500$, while the configuration is quite different from the final cylindrical shape.
\begin{figure}[htbp]
  \centering
  \includegraphics[width=0.9\linewidth]{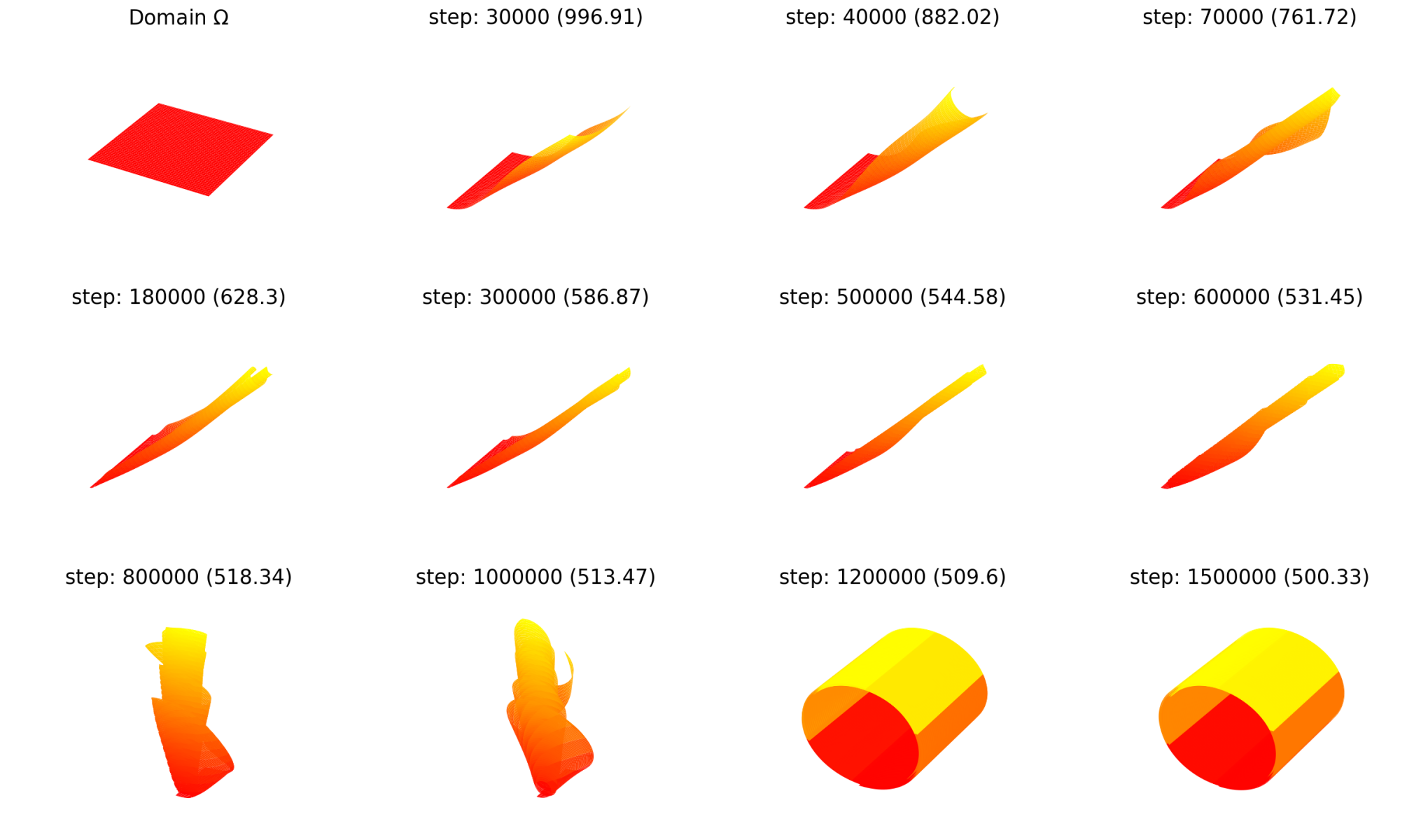}
\caption{$\alpha = 5$, $\beta = 1000$. We train the network directly and plot the pseudo-evolution of the bilayer plate. The iteration steps and energy corresponding to the deformation are shown above. After about $1,200,000$ SGD iteration, the plate reaches a cylindrical shape, which is the absolute minimizer. The energy is $500$.}\label{fig:a5deformation}
\end{figure}
\begin{figure}[htbp]
  \centering
  \subfigure[]{
    \includegraphics[width=0.45\linewidth]{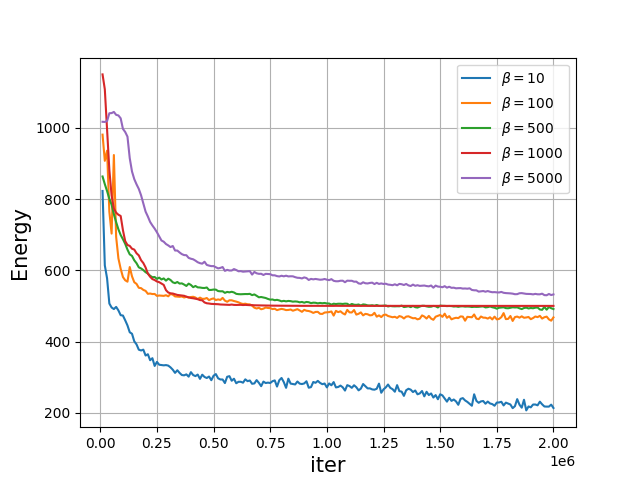}
    \label{Figa5E}
  }
  \subfigure[]{
    \includegraphics[width=0.45\linewidth]{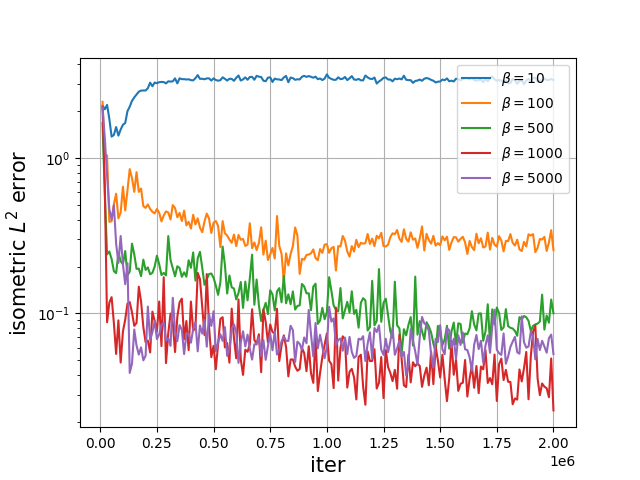}
    \label{Figa5I}
  }
  \caption{Training loss, $\alpha= 5$. Decay of energy $E[\hat{u}]$ and $L^2$-isometric error $C[\hat{u}]$ for different penalized parameter $\beta$ during the training process.}\label{fig.traina5}
\end{figure}
\begin{figure}[htbp]
\centering
\includegraphics[width=0.6\linewidth]{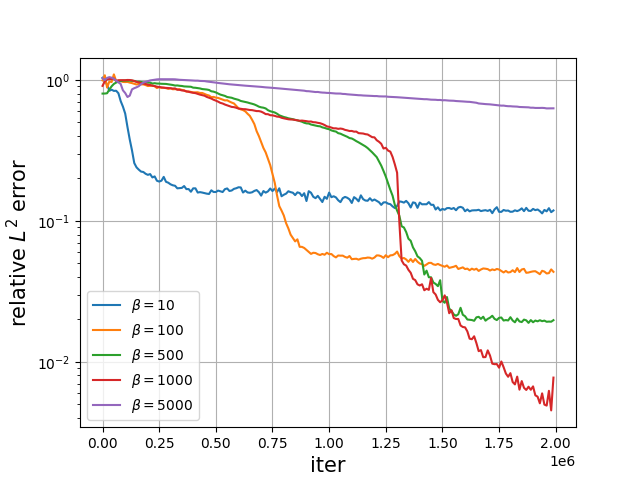}
\caption{Test error, $\alpha= 5$. For every $1e4$ steps of iteration, we report the $L^2$-relative error. When $\beta$ is properly chosen, compared to $\alpha=2.5$, much more steps are required to reduce $L^2$-error below $5e-2$ for $\alpha=5$, which motivates us to pre-train the network to find a good initial deformation to accelerate convergence.}\label{fig. testa5}
\end{figure}

We report the approximating energy, the isometrical discrepancy error and the relative $L^2$-error in Table~\ref{tab.finalA5}. It seems the method with very small $\beta$ fails to give the desired cylindral shape, while method with very large $\beta$ is also not so accurate. The method with $\beta=500$ and $\beta=1000$ leads to satisfactory results, the convergence behavior is similar to that of $\alpha=2.5$, while the number of the iteration doubles. We shall use Pre-training method (Algorithm \ref{pre-training}) for speed-up.
\begin{table}[htbp]
\centering
\begin{tabular}{ccccc}
\toprule
    $\beta$ & $E[\hat{u}]$ & $C[\hat{u}]$ & $e_{L^2}$ &Cylindrical shape\\
    \midrule
    10      &222.42 & 3.11 &1.19e-1&N \\
    100    & 465.96& 2.93e-1 &4.35e-2&Y \\
    500    & 500.04 & 8.18e-2 & 1.97e-2&Y \\
    1000  & 500.11& 5.39e-2 & 7.70e-3 &Y\\
    5000  & 531.19& 5.21e-2 & 6.29e-1 &Y\\
    \bottomrule
\end{tabular}
\caption{Test error, $\alpha = 5$. We report the energy, $L^2$-isometric error, $L^2$-relative error and whether the plate reaches a cylindrical shape after $2e6$ iteration steps. As $\alpha$ increases from $2.5$ to $5$, we obtain the similar $L^2$-relative error with exact solution, but more iteration steps are required.}\label{tab.finalA5}
\end{table}

We split the domain equally into $5$ blocks; see Figure~\ref{fig:DDrec}, take $Epoch_{pre}=50000$ and apply Algorithm \ref{pre-training}. In view of Figure~\ref{fig. pretraina5}, the pre-training provides a good initial guess for the iteration. In view of Table~\ref{tab.prefinala5} and Figure~\ref{fig:prea5}, the isometric constraint errors are lower for small $\beta$ such as $100$ and $500$ when the number of the pre-training step is larger than $2e5$, and the relative $L^2$-error may be improved by an order of magnitude. We achieve better accuracy with only a quarter of the number of the iteration compared to the direct training.
\begin{figure}[htbp]
  \centering
  \includegraphics[width=0.6\linewidth]{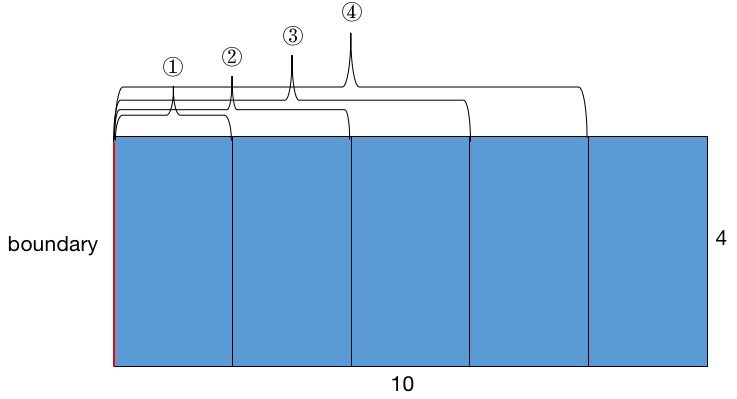}
\caption{To pre-train the network, we divide the domain $(-5,5) \times   (-2,2) $ equally into $5$ parts. We first randomly sample points in subdomain $1$ and train $Epoch_{pre}$ steps, and then continue the same process in subdomains $2, 3$, and $4$.}
  \label{fig:DDrec}
\end{figure}
\begin{figure}[htbp]
  \centering
  \includegraphics[width=0.6\linewidth]{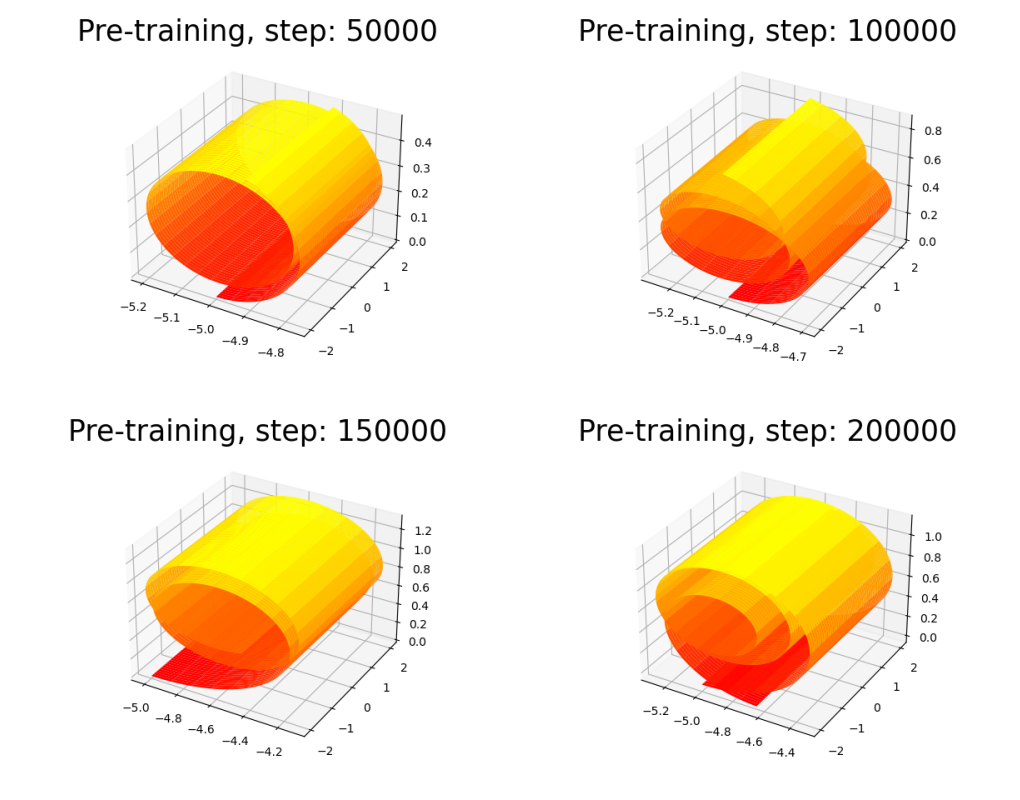}
  \caption{$\alpha = 5$, pre-training provids a good initial guess for the iteration.}\label{fig. pretraina5}
\end{figure}
\begin{figure}[htbp]
  \centering
  \includegraphics[width=0.6\linewidth]{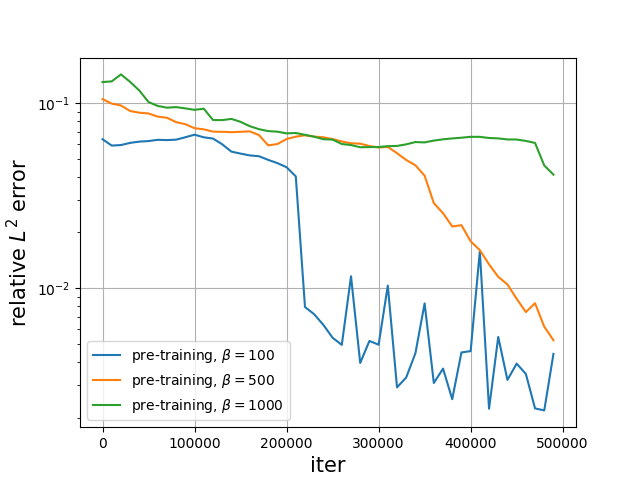}
  \caption{Test error, $\alpha = 5$. After training a total number of $2e5$ steps in the four subdomains in order, we train the network in the whole domain, and report the $L^2$-relative error with the exact solution every $1e4$ steps. For the same value of $\beta$, pre-training greatly speeds up the convergence and improves the accuracy.}
  \label{fig:prea5}
\end{figure}
\begin{table}[htbp]
  \centering
  \begin{tabular}{ccccc}
    \toprule
    $\beta$ & $E[\hat{u}]$ & $C[\hat{u}]$ & $e_{L^2}$ &Cylindrical shape\\
    \midrule
    100    & 500.02& 7.48e-2&4.40e-3&Y \\
    500    & 500.05& 3.17e-2& 5.23e-3&Y \\
    1000  & 500.96& 2.50e-2 & 4.72e-2&Y\\
    \bottomrule
  \end{tabular}
  \caption{Test error, $\alpha = 5$. We train the network $5e5$ steps in the whole domain after $2e5$ steps pre-training, and we report the energy, $L^2$-isometric error, $L^2$-relative error and whether the plate reaches a cylindrical shape.}\label{tab.prefinala5}
\end{table}

We test $\alpha=10$ in the next example, which has not appeared in the previous simulations.
\begin{example}
$Z = -10\textup{Id}_2$.
\end{example}

\textsc{Schmidt}~\cite{schmidt2007minimal} has shown that the global minimizer
of~\eqref{eq:plate} for this problem is given by a cylinder of height $4$ and radius $0.1$ with an energy of $2000$. No previous works have considered such a complicate benchmark example. The bilayer plate needs to roll 16 turns to reach the global minimizer. 

We firstly train the network without the pre-training step and illustrate the results in Figure~\ref{fig. testa10} and Table~\ref{tab.fina10}. It is clear that the method fails to yield the cylindrical shape unless the penalty parameter is as big as $10000$. For small $\beta$, the method seems to be trapped in a local minimizer basin as shown in Figure~\ref{fig:a10dogears}. This is probably due to the fact that the much larger deformation makes it hard to maintain isometric constraints for small $\beta$. To tackle this problem, we exploit the pre-training.

We still divide the domain equally into $5$ blocks; See, Figure~\ref{fig:DDrec}, take $Epoch_{pre}=50000$ and pre-train the 
neural network with a total number of $2e5$ steps. The pseudo-evolutions of the plate with $\beta=1000$ are shown in Figure~\ref{fig:a10b1000predeformation}, the plate bends towards the cylindrical shape. It follows from Figure~\ref{fig:testa10pre} and Table~\ref{tab.testa10Pre} that the bilayer plate reaches the absolute minimizer with an energy $2000.76$ and the relative $L^2$-error is $1.57e-2$ when $\beta=1000$.
\begin{figure}[htbp]
  \centering
  \subfigure[]{
    \includegraphics[width=0.45\linewidth]{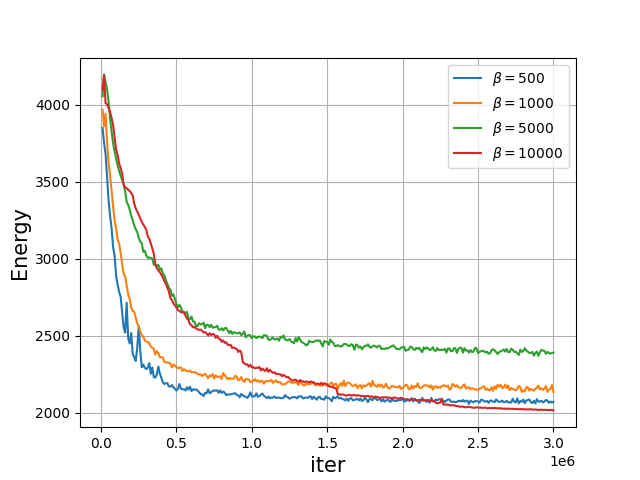}
    \label{Figa10E}
  }
  \subfigure[]{
    \includegraphics[width=0.45\linewidth]{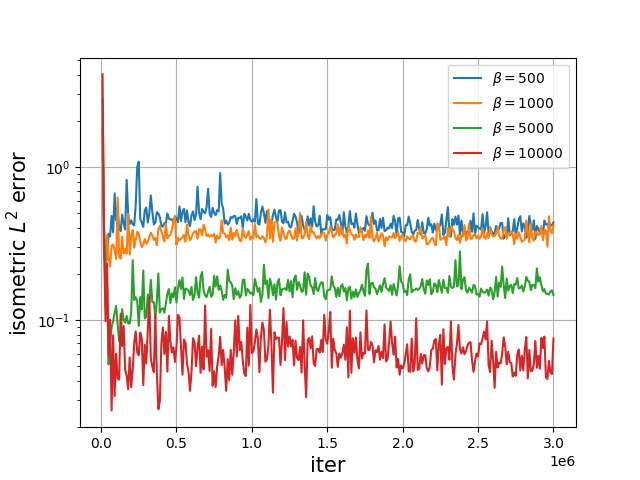}
    \label{Figa10I}
  }
\caption{Training loss, $\alpha= 10$. Decay of energy $E[\hat{u}]$ and $L^2$-isometric error $C[\hat{u}]$ for different penalized parameter $\beta$ during the training process. For $\alpha=10$, if we train the network directly, we need to take $\beta=1e4$ to maintain the isometric constraint.}\label{fig.traina10}
\end{figure} 
\begin{figure}[htbp]
  \centering
  \includegraphics[width=0.6\linewidth]{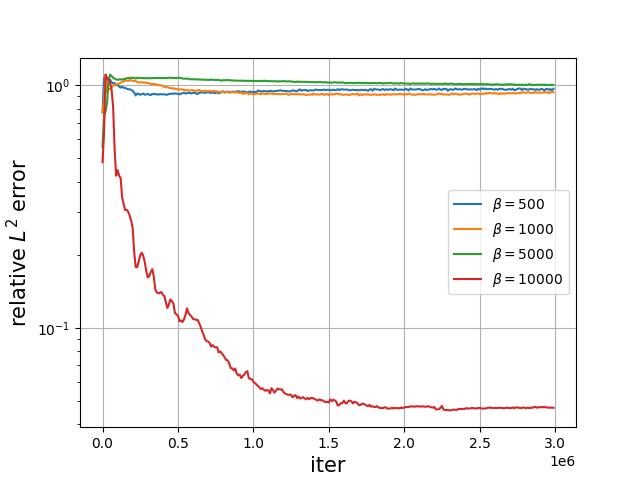}
\caption{Test error, $\alpha= 10$. We report the $L^2$-relative error with the exact solution every 10000 steps. For $\alpha=10$, the bilayer plate also reaches a cylindrical shape when $\beta$ equal to 10000.}\label{fig. testa10}
\end{figure}
\begin{table}[htbp]
  \centering
  \begin{tabular}{ccccc}
    \toprule
    $\beta$ & $E[\hat{u}]$ & $C[\hat{u}]$ & $e_{L^2}$ &Cylindrical shape\\
    \midrule
    500 & 2068.84&4.11e-1 & 9.67e-1 &N\\
    1000 & 2129.63& 3.68e-1& 9.35e-1 &N\\
    5000 & 2371.09& 1.50e-1 & 1.01&N\\
    10000 & 2018.27
    & 5.38e-2& 4.67e-2&Y\\
    \bottomrule
  \end{tabular}
  \caption{Test error, $\alpha = 10$. We report the energy, $L^2$-isometric error, $L^2$-relative error and whether the plate reaches a cylindrical shape after $3e6$ iteration steps. The bilayer plate seems to fall into a local minimizer for small $\beta$, when $\alpha=10$; cf. Figure~\ref{fig:a10dogears}.}\label{tab.fina10}
\end{table}
\begin{figure}[htbp]
  \centering
  \includegraphics[width=0.6\linewidth]{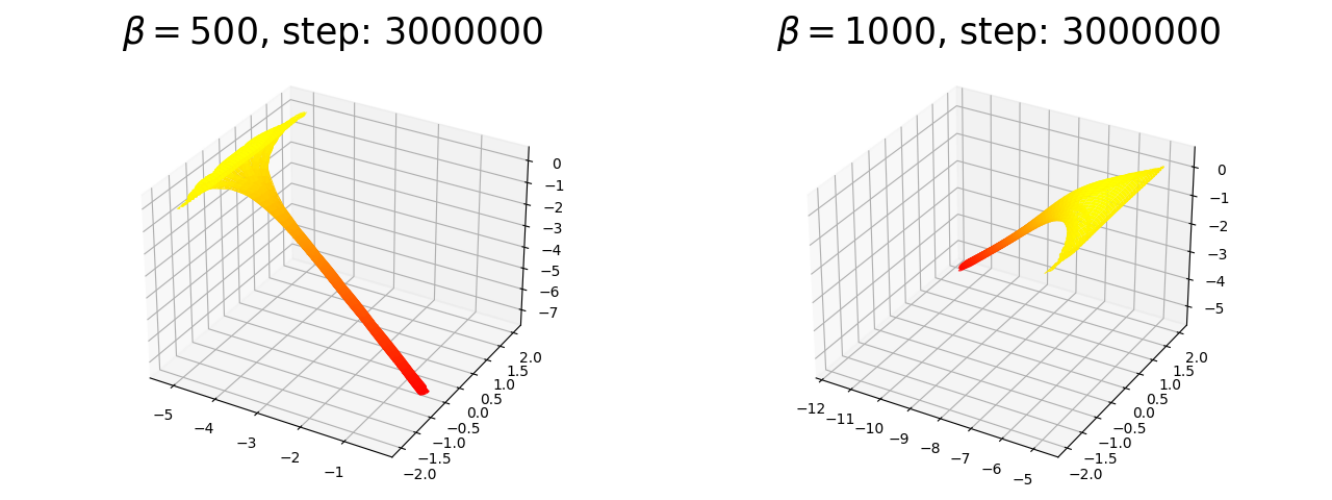}
\caption{$\alpha = 10$. Training the network directly, the 
plate seems to be trapped in a local minimizer.}\label{fig:a10dogears}
\end{figure}
\begin{figure}[htbp]
  \centering
  \includegraphics[width=0.6\linewidth]{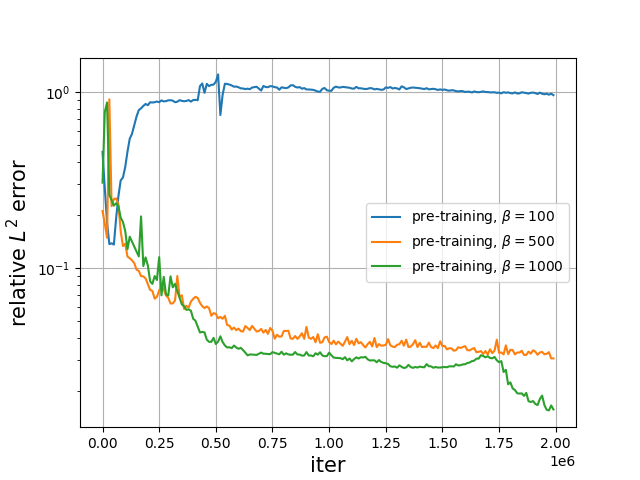}
  \caption{Test error, $\alpha= 10$. After training a total of $2e5$ steps in the four subdomains in order, we train the network in the whole domain, and report the $L^2$-relative error with the exact solution every $1e4$ steps.}
  \label{fig:testa10pre}
\end{figure}
\begin{figure}[htbp]
  \centering
  \includegraphics[width=0.9\linewidth]{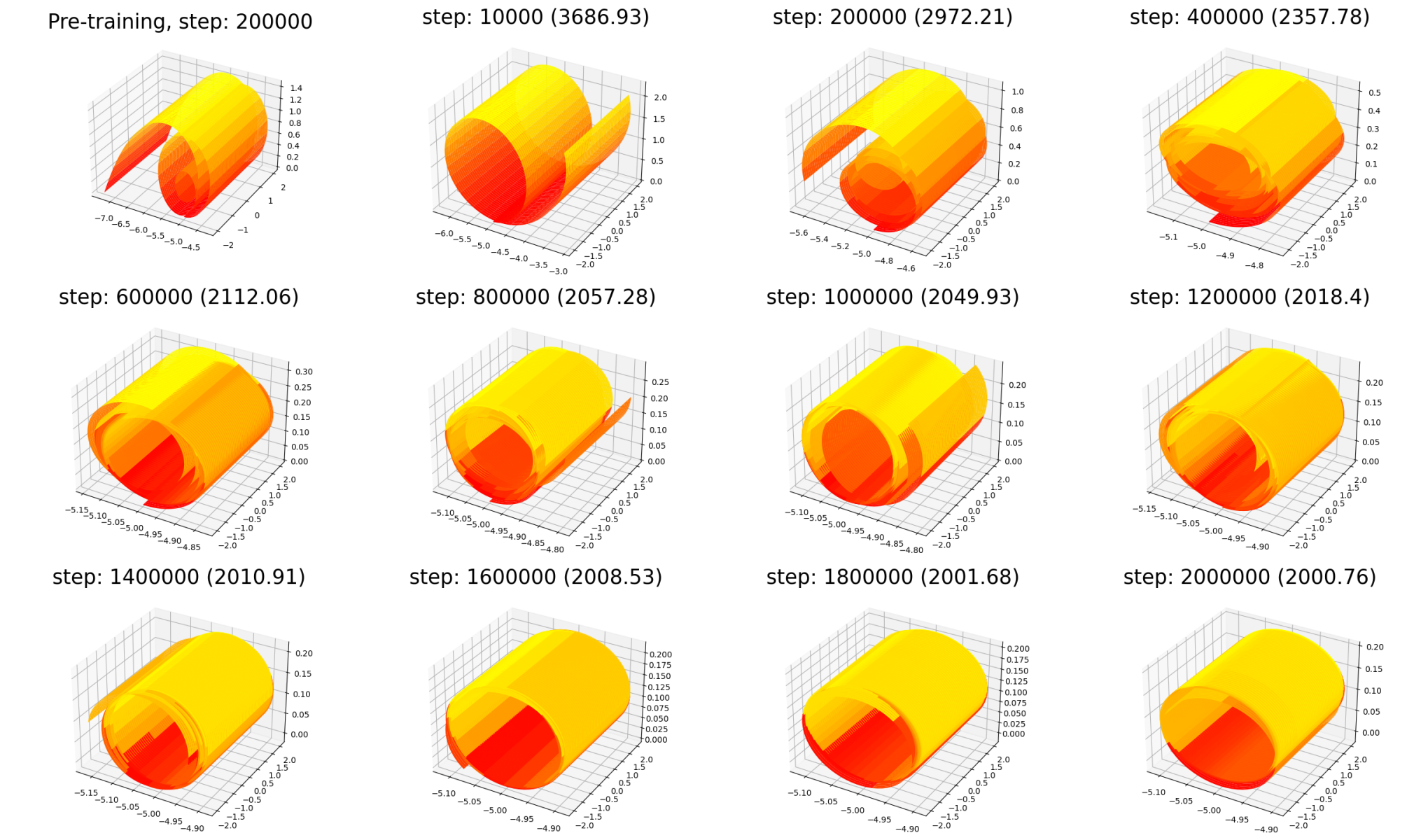}
  \caption{$\alpha = 10$ and $\beta = 1000$. Pre-training finds a good initial deformation, while the plate without pre-training seems to be trapped in a local minimizer; cf. Figure~\ref{fig:a10dogears}.}
  \label{fig:a10b1000predeformation}
\end{figure}
\begin{table}[htbp]
  \centering
  \begin{tabular}{ccccc}
    \toprule
    $\beta$ & $E[\hat{u}]$ & $C[\hat{u}]$ & $e_{L^2}$ &Cylindrical shape\\
    \midrule
    100 & 2119.39&1.18& 9.63e-1&N\\
    500 & 2043.25&1.63e-1& 3.06e-2 &Y\\
    1000 & 2000.76& 2.99e-2& 1.57e-2 &Y\\
    \bottomrule
  \end{tabular}
\caption{Test error, $\alpha = 10$. We first pre-train the network $2e5$ steps, and report the energy, $L^2$-isometric error, $L^2$-relative error and whether the plate reaches a cylindrical shape after $2e6$ iteration steps. Pre-training allows the bilayer plate to converge to the absolute minimizer when $\beta=500,1000$, and also greatly improves the accuracy of the solution.}\label{tab.testa10Pre}
\end{table}     
\subsection{O-shape domain}      
\begin{example}
Let $\Omega$ be $(-5,5) \times   (-2,2) \backslash(-10/3,10/3) \times   (-4/3,4/3) $, clamped on the left side  $\Gamma_D=\{-5\} \times[-2,2]$ , i.e.,
  \begin{small}
    \begin{equation*}
      u=
            \begin{bmatrix}
      x_1\\
            x_2\\
            0
      \end{bmatrix}_{3\times 1}, \quad \nabla u=
   \begin{bmatrix}
    \textup{Id}_{2}\\
    0
\end{bmatrix}_{3\times 2} \quad \text { on } \Gamma_D,
    \end{equation*}
  \end{small}
and we take $Z = -5\,\textup{Id}_2$.
\end{example}             

O-shape example was firstly proposed in~\cite{Bar17bilayer}, in which an equilibrium state like dog's ears has been captured by the gradient flow algorithm. Since the theoretical absolute minimizer is unknown, we aim to explore the minimizer. To skip the exploration process of how large an appropriate penalty parameter $\beta$ needs to be, we still divide the domain into 5 blocks; See, Figure~\ref{fig:inito}, take $Epoch_{pre}=50000$ and pre-train the neural network with a total number of $2e5$ steps.

We illustrate the pseudo-evolution of the plate in Figure~\ref{fig:preoa5b1000}. It is interesting to note that the bilayer plate still rolls into a cylindrical shape. To test the error, we take the cylindrical shape~\eqref{exact sol} as the reference solution with energy $277.78$.  It follows from Figure~\ref{fig:relerrora5od} and Table~\ref{tab.oA5} that the bilayer plate reaches the absolute minimizer with an energy $278.36$ and the relative $L^2$-error is $4.78e-2$ when $\beta = 1000$.
\begin{figure}[htbp]
\centering
\includegraphics[width=0.6\linewidth]{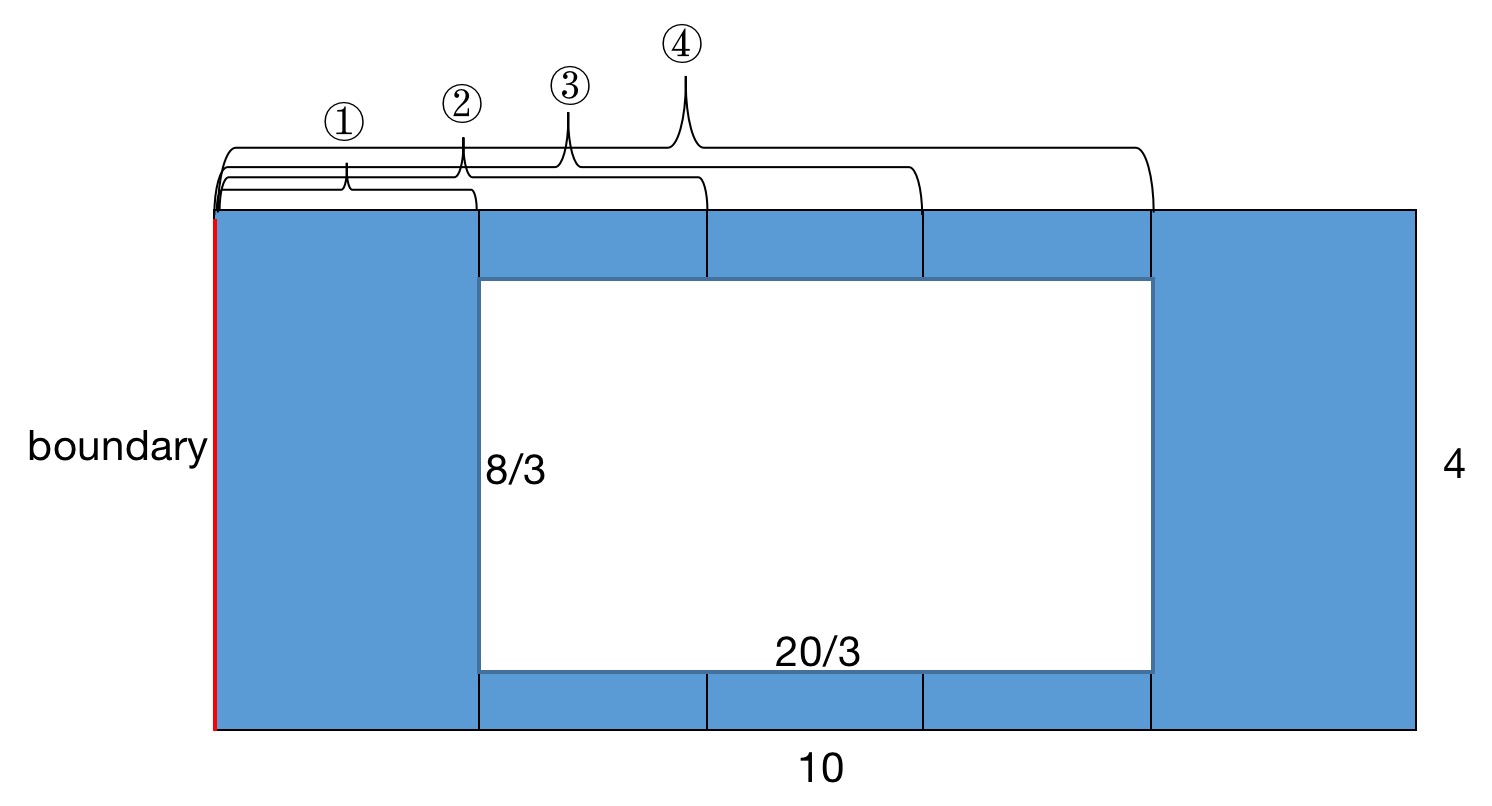}
\caption{To pre-train the network, we divide the domain $(-5,5) \times   (-2,2) \backslash(-10/3,10/3) \times   (-4/3,4/3) $ into 5 parts. We first  randomly sample points in subdomain $1$ and train $Epoch_{pre}$ steps, and then continue the same process in subdomains $2, 3$, and $4$. }
  \label{fig:inito}
\end{figure}
\begin{figure}[htbp]
  \centering
  \includegraphics[width=0.9\linewidth]{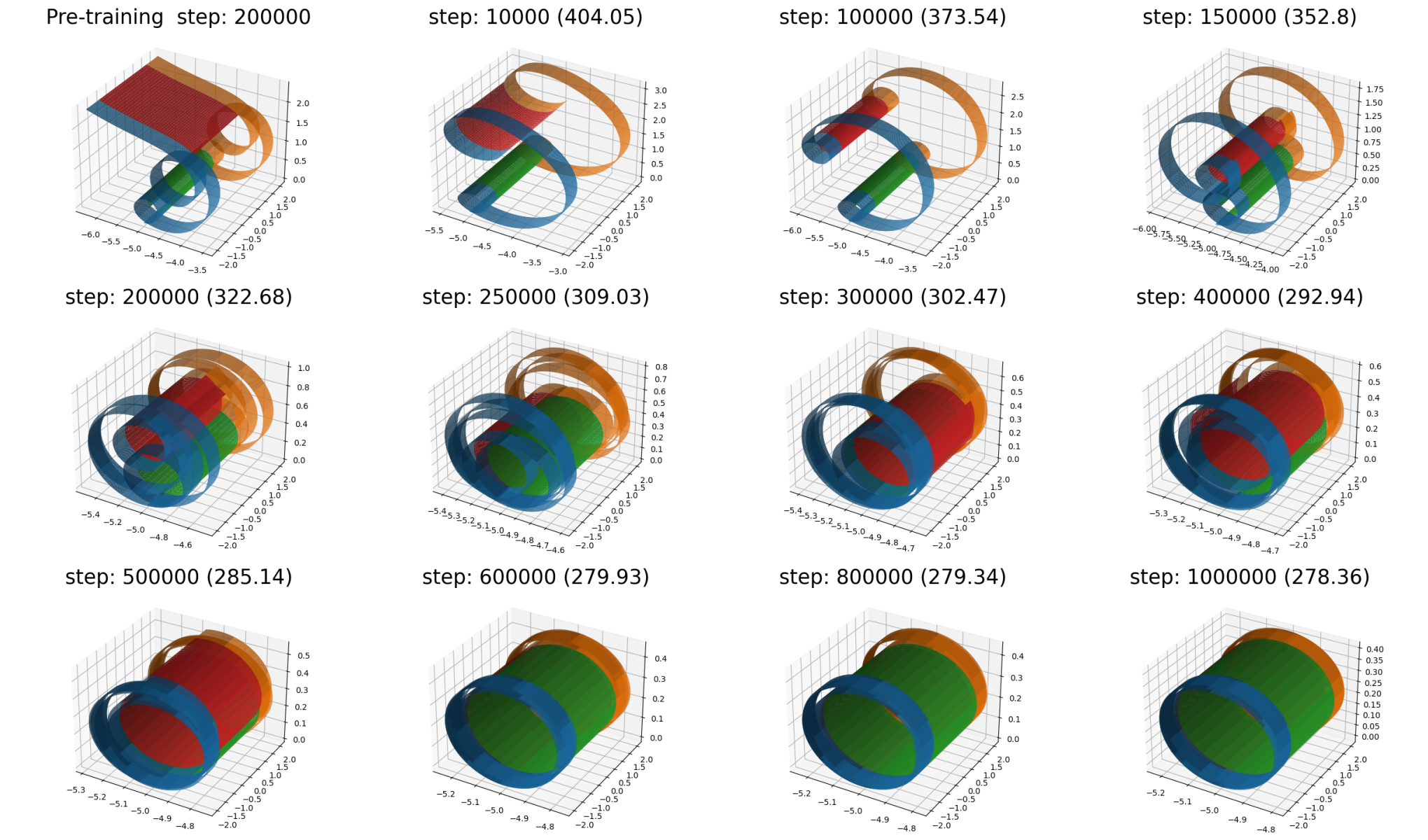}
  \caption{O-shape: $\alpha = 5$ and $\beta = 1000$. To make it easier to distinguish, we use different colors to mark the different parts. The pre-training finds a good initial deformation, and the bilayer plate finally reach a cylindrical shape.}
  \label{fig:preoa5b1000}
\end{figure}
\begin{figure}[htbp]
  \centering
  \includegraphics[width=0.6\linewidth]{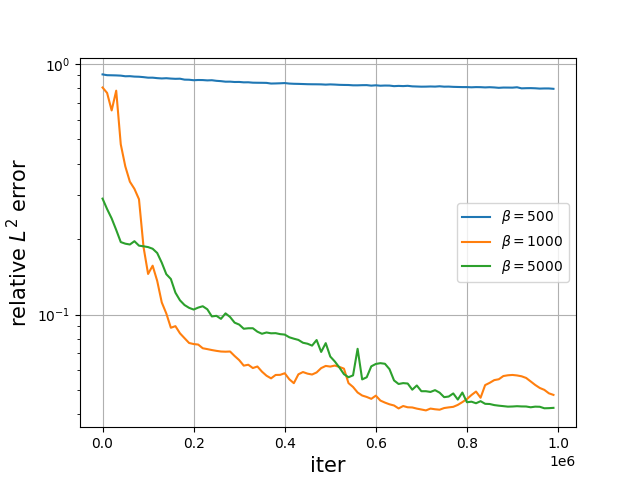}
  \caption{Test error, $\alpha= 5$, O-shape. After training a total of $2e5$ steps in the four subdomains in order, we train the network over the whole domain, and we report the $L^2$-relative error with the exact solution every $1e4$ steps.}
  \label{fig:relerrora5od}
\end{figure}
\begin{table}[htbp]
  \centering
  \begin{tabular}{ccccc}
    \toprule
    $\beta$ & $E[\hat{u}]$ & $C[\hat{u}]$ & $e_{L^2}$ &Cylindrical shape\\
    \midrule
    500 & 269.74&1.13e-1& 7.96e-1 &N\\
    1000 & 278.36& 8.36e-2& 4.78e-2&Y\\
    5000 & 282.34& 2.88e-2&4.24e-2&Y\\
    \bottomrule
  \end{tabular}
\caption{Test error, $\alpha = 5$, O-shape. We first pre-train the network $2e5$ steps, and report the energy, $L^2$-isometric error, $L^2$-relative error and whether the plate reaches a cylindrical shape after $1e6$ iteration steps.}\label{tab.oA5}
\end{table} 
% Conner clamped O-shape example that rolls in two dimension
\begin{example}\label{ex:oc}
Let $\Omega$ be $(-5,5) \times   (-2,2) \backslash(-13/3,13/3) \times   (-4/3,4/3) $, clamped on the corner  $\Gamma_D=\{-5\}\times(-2,-4/3)\cup (-5,-13/3)\times\{-2\}$ , i.e.,
  \begin{small}
    \begin{equation*}
      u=
            \begin{bmatrix}
      x_1\\
            x_2\\
            0
      \end{bmatrix}_{3\times 1}, \quad \nabla u=
   \begin{bmatrix}
    \textup{Id}_{2}\\
    0
\end{bmatrix}_{3\times 2} \quad \text { on\quad} \Gamma_D,
    \end{equation*}
  \end{small}
and we take $Z = -\textup{Id}_2$.
\end{example}     
We impose the boundary condition on $\hat{u}(x;\theta)$ by  
\[
\hat{u}(x; \theta) = g_1(x; \hat{\theta}) \hat{y}(x;\theta) + \begin{bmatrix}
      x_1\\
      x_2\\
      0
\end{bmatrix}, 
\] 
where $g_1$ is trained on $\partial\Omega$ by $50000$ steps with least-square loss function approximating
     \[
     	g_1|_{\Gamma_D}=0,\quad\nabla g_1|_{\Gamma_D}=0,\quad g_1|_{\partial\Omega\backslash\Gamma_D}=x_1+x_2+19/3.
     \]
 After $50000$ steps training, the boundary function $g_1$ is smooth and satisfies~\eqref{eq:g1}; See Figure~\ref{fig:g1}. Then we fix $g_1$ and employ the pre-training algorithm. We still divide the domain into $5$ blocks; See Figure~\ref{fig:inito2}, take $Epoch_{pre}=50000$ and pre-train the neural network with a total number of $2e5$ steps.
 
 We illustrate the pseudo-evolution of the plate in Figure~\ref{fig:cornerClamped}. The bilayer plate rolls in two directions, which reaches the final state with an energy $2.29$ and the $L^2$-isometric error is around $6.07e-2$ when $\beta = 500$.
\begin{figure}[htbp]
\centering
\includegraphics[width=0.6\linewidth]{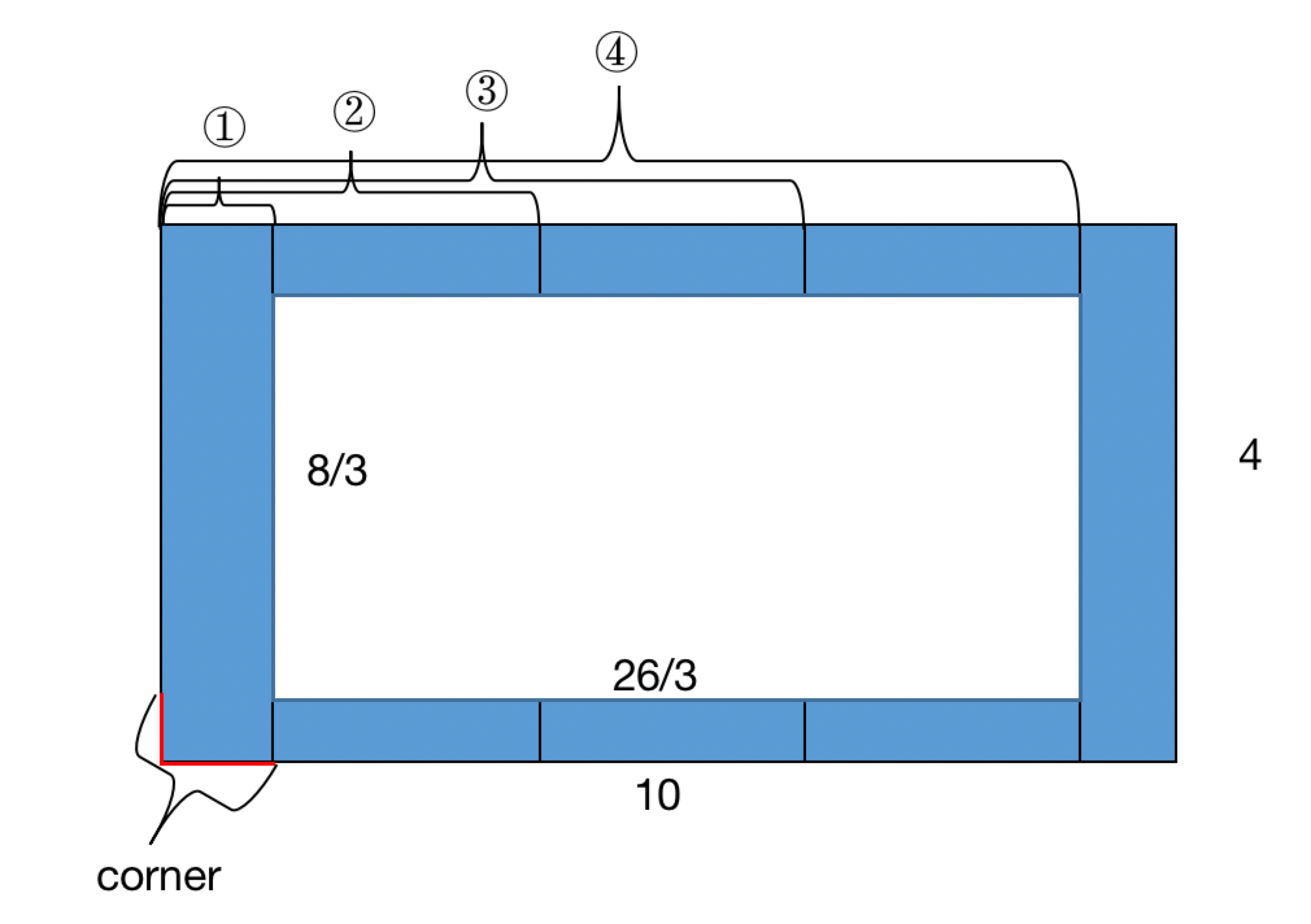}
\caption{To pre-train the network, we divide the domain $(-5,5) \times   (-2,2) \backslash(-13/3,13/3) \times(-4/3,4/3) $ into 5 parts. We randomly sample points in subdomain $1$ and train the network $Epoch_{pre}$ steps, and then apply the same procedure to subdomains $2, 3$, and $4$. }\label{fig:inito2}
\end{figure}
\begin{figure}[htbp]
\centering
\includegraphics[width=0.6\linewidth]{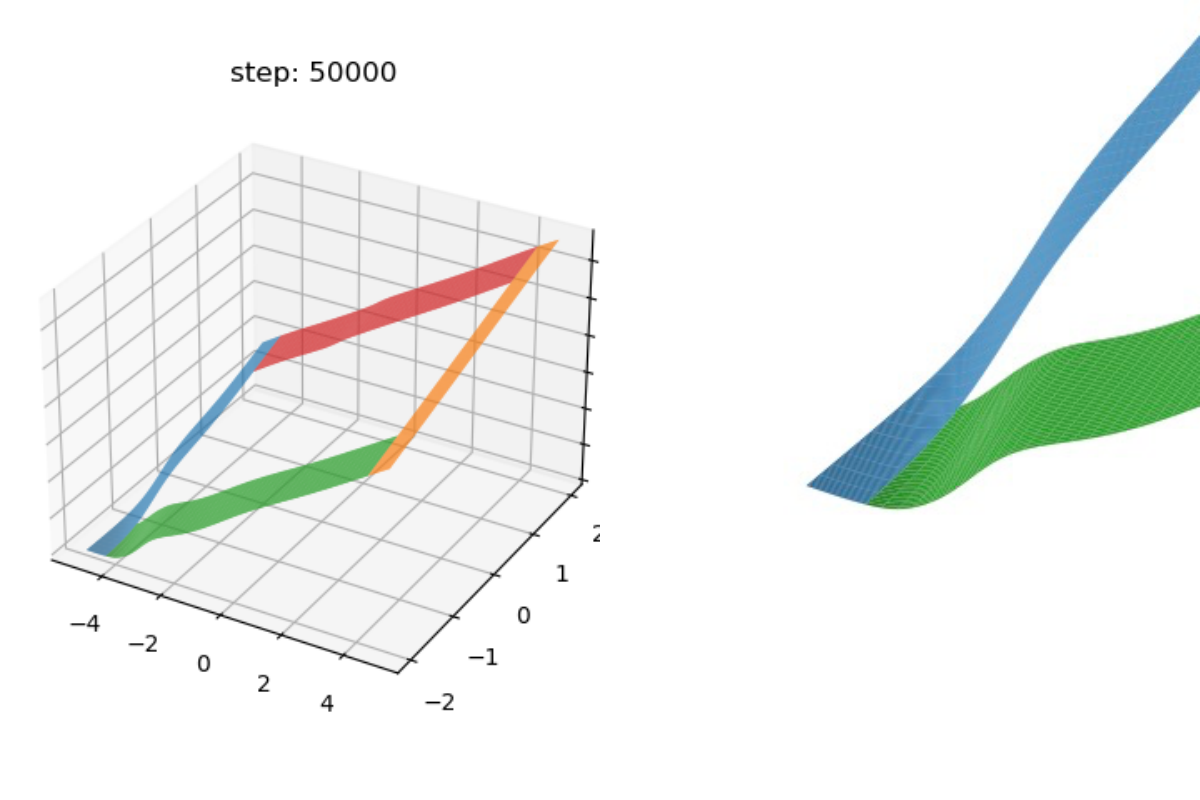}
\caption{$g_1$ is smooth enough and behaves like the distance function to the clamped boundary.}\label{fig:g1}
\end{figure}
\begin{figure}[htbp]\centering
\includegraphics[width=0.9\linewidth]{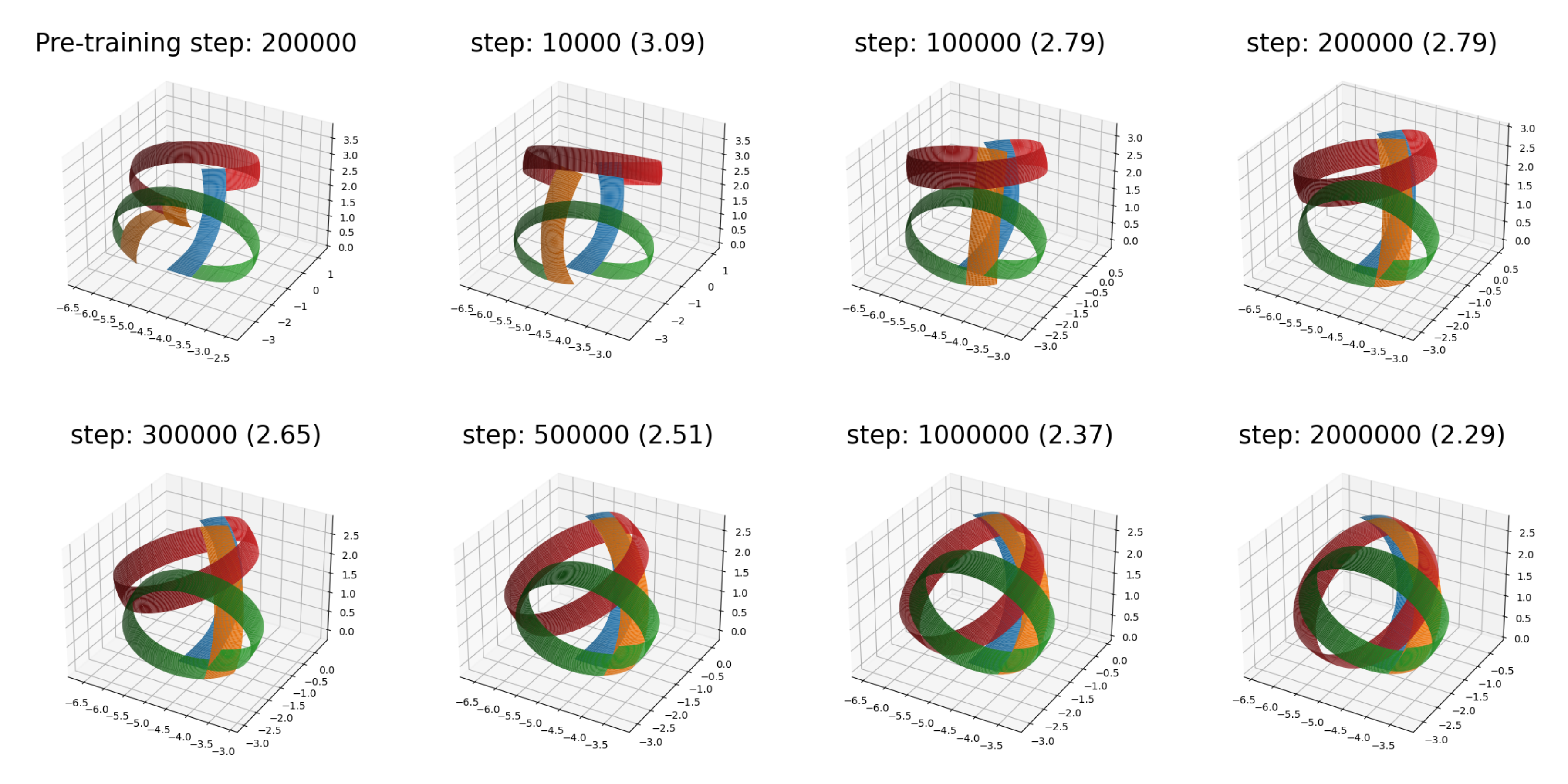}
  \caption{O-shape: $\alpha = 1$ and $\beta = 500$. To make it easier to distinguish, we use different colors to mark the different parts. The pre-training finds a good initial deformation, and the bilayer plate rolls in two directions.}
  \label{fig:cornerClamped}
\end{figure}

Inspired by~\cite{Bar17bilayer} and~\cite{bonito2020discontinuous}, we shall consider the plate with an anisotropic spontaneous curvature.
\subsection{Corkscrew shape}
\begin{example}
The plate $\Omega=(-2. 2) \times   (-3,3)$ is clamped on the side  $\Gamma_D=[-2,2]\times\{-3\}$ , i.e.,
\begin{equation*}
u=\begin{bmatrix}
    x_1\\
    x_2\\
    0
\end{bmatrix}_{3\times 1}, \quad \nabla u=
\begin{bmatrix}
    \textup{Id}_{2}\\
    0
\end{bmatrix}_{3\times 2} \quad \textup{ on } \Gamma_D,
\end{equation*}
and
\begin{equation}
    Z = 
    \begin{bmatrix}
      -3 & 2\\
      2 & -3
    \end{bmatrix},
\end{equation}
which has two eigenvalues $-1$ and $-5$. The angles between the eigenvectors and $x-$coordinate axis are $\pi/4$ and $3\pi/4$, respectively.
\end{example}

We train the neural network directly and illustrate the pseudo-evolution of the plate in Figure~\ref{fig:anicur}. The bilayer plate gradually rolls into a corkscrew shape under anisotropic spontaneous curvature after $2e5$ steps iteration. Since the theoretical absolute minimizer is unknown, we only report the final energy and $L^2$-isometric error. It follows from Table ~\ref{tab.corksrew} that the bilayer plate reaches the corkscrew shape minimizer with an energy $52.53$ and the $L^2$-isometric error is $7.32e-2$ when $\beta = 500$, which is much lower than the best energy $63.31$ reported in~\cite{Bar17bilayer}. 
\begin{figure}[tbph]
  \centering
  \includegraphics[width=0.9\linewidth]{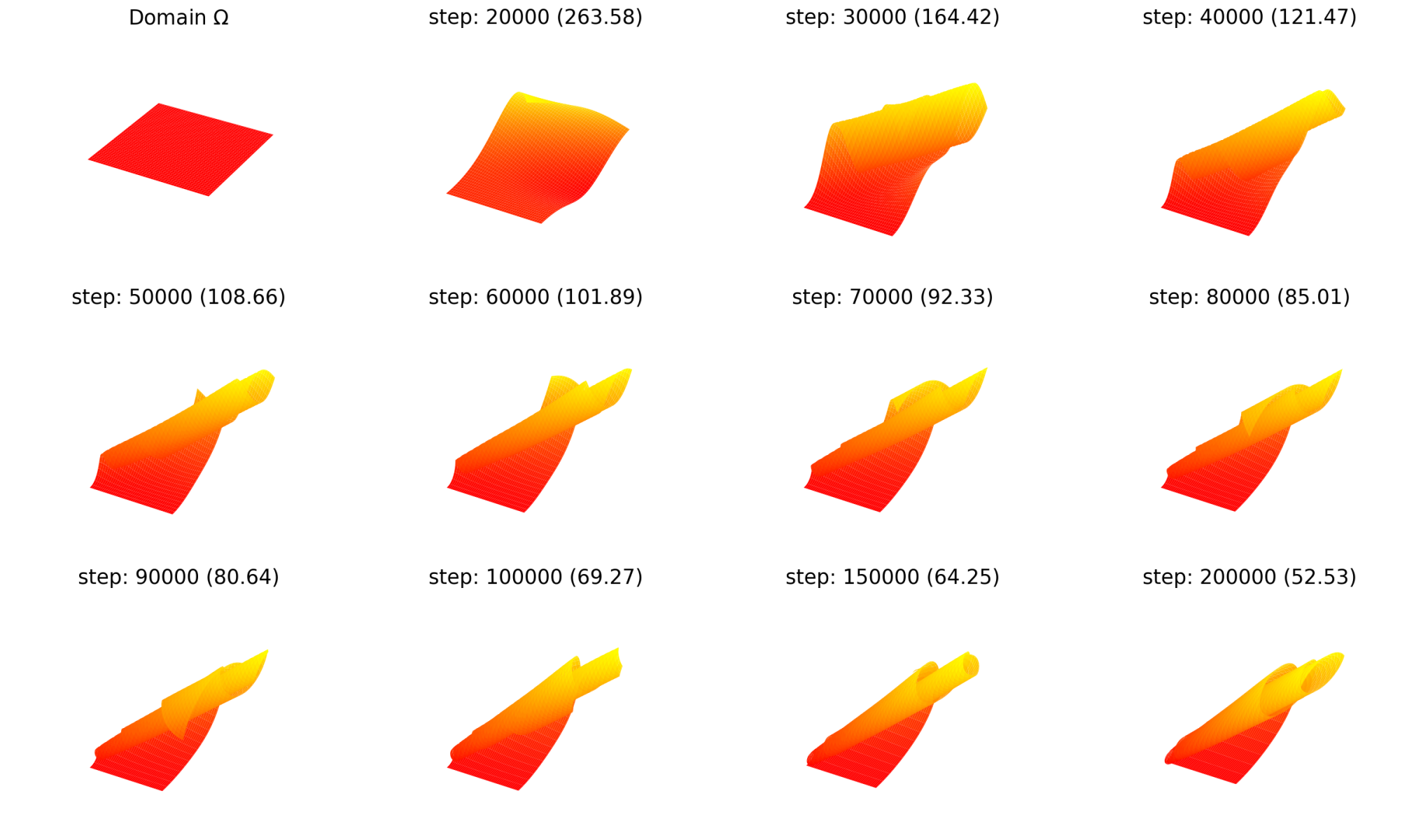}
  \caption{Corkscrew shape, $\beta = 500$. We train the network directly and plot the evolution of the bilayer plate. The number of the iteration steps and the energy are reported.}
  \label{fig:anicur}
\end{figure}
\begin{table}[htbp]
  \centering
  \begin{tabular}{cccc}
    \toprule
    $\beta$ & $E[\hat{u}]$ & $C[\hat{u}]$ & Corkscrew shape\\
    \midrule
    100 & 50.31& 1.21e-1& Y\\
    500 & 52.53& 7.32e-2 &Y\\
    1000 & 55.43& 6.83e-2& Y \\
    \bottomrule
  \end{tabular}
  \caption{Test error, corkscrew shape. We report the energy, $L^2$-isometric error and whether the plate reaches a corkscrew shape after $2e5$ step iterations.}\label{tab.corksrew}
\end{table}

\subsection{Free boundary: anisotropic curvature}
\begin{example}
The plate $(-5,5)\x (-2,2) $ is free of boundary conditions and has the anisotropic spontaneous curvature 
    \begin{equation}
    Z = \begin{bmatrix}
        3&-2\\
        -2&3
    \end{bmatrix},
    \end{equation}
    which has two different eigenvalues $-1$ and $-5$. 
\end{example}

This example is motivated by~\cite{Simpson:2010} in the mechanical engineer literature. We illustrate the pseudo-evolution of the plate in Figure~\ref{fig:cigardeformation}. The plate rolls into the a cigar-like configuration.

We report the energy of the final configuration for different penalty factors $\beta$ in Table~\ref{tab.cigar}. It seems the energy is almost the same with that of the cylindrical shape with radius $1$ in Example~\ref{ex:a1}.
\begin{figure}[tbph]
\centering
\includegraphics[width=0.9\linewidth]{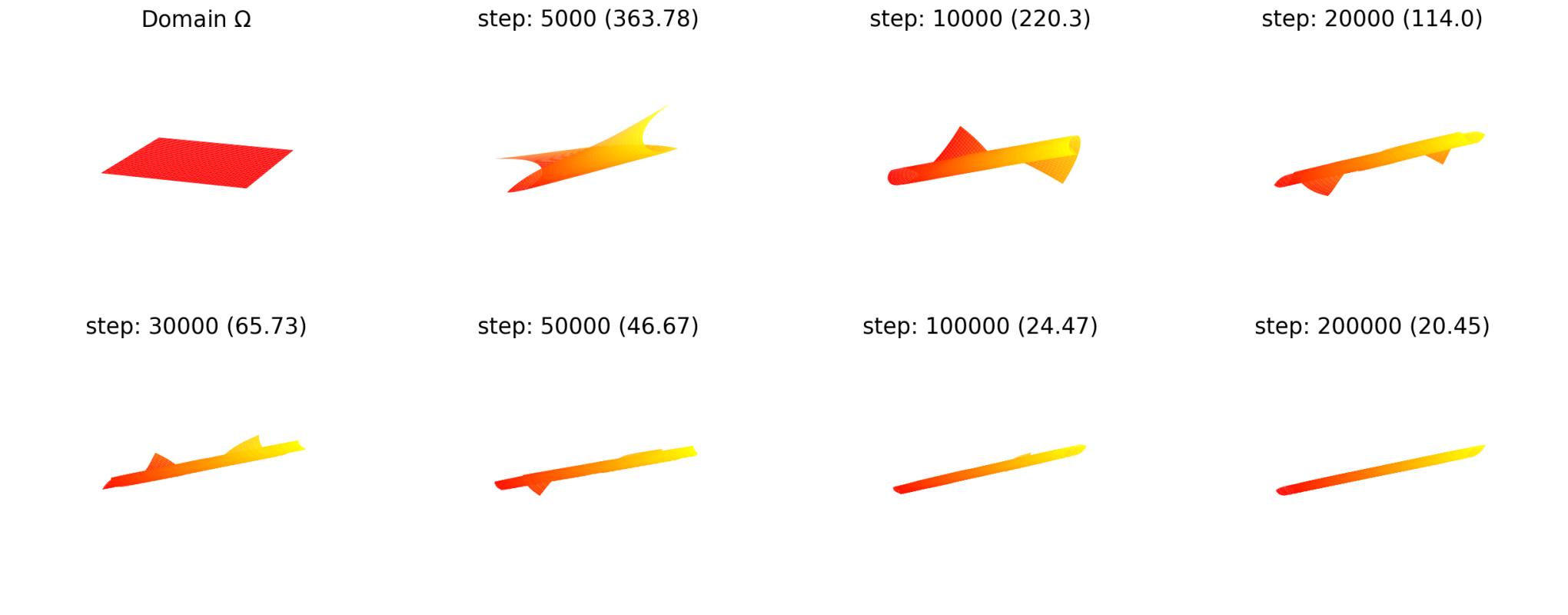}
\caption{Free boundary: a cigar-type configuration, $\beta = 500$. We train the network directly and plot the pseudo-evolution of the bilayer plate. The number of the iteration steps and the energy are reported.}
\label{fig:cigardeformation}
\end{figure}
\begin{table}[htbp]
  \centering
  \begin{tabular}{cccc}
    \toprule
    $\beta$ & $E[\hat{u}]$ & $C[\hat{u}]$ &Cigar-like shape  \\
    \midrule
    100 & 20.04& 5.45e-2 & Y\\
    500 & 20.45& 3.67e-2 & Y\\
    1000 & 20.58&3.31e-2 &Y\\
    \bottomrule
  \end{tabular}
\caption{We report the energy, $L^2$-isometric error and whether the plate reaches a cigar-like configuration after $2e5$ step iterations.}\label{tab.cigar}
\end{table}

As the last example, we report a bilayer plate with free boundary. This example has been observed in~\cite{Jan:2016} using bilayer materials as a building block towards the more complex self-folding structures.
\subsection{Free boundary: helix shape}
\begin{example}
The plate $\Omega=(-8,  8) \times   (-0.5, 0.5)$ has a very high aspect ratio $16$, which deforms with free boundary under the effect of an anisotropic spontaneous curvature 
\begin{equation}
Z = \begin{bmatrix}
      -1 & 3/2\\
      3/2 & -1
    \end{bmatrix}.
\end{equation}
\end{example}

We illustrate the pseudo-evolution of the plate in  Figure~\ref{fig:freeboundary}. The plate rolls into the a DNA-like configuration, which confirms the DNA-like structure reported in~\cite{Jan:2016}. This configuration is also obtained in~\cite{DG2021} by a dG approximation and a gradient flow based minimization algorithm applying to the bilayer plate model~\eqref{eq:plate}.%

We also report the final energy in Figure~\ref{fig:freeboundary}, which is missing in~\cite{Jan:2016} and~\cite{DG2021}. In view of Table~\ref{tab.corksrew}, by contrast to the clamped plate, the isometric constraint may be more easily maintained.
\begin{figure}[tbph]
  \centering
  \includegraphics[width=0.9\linewidth]{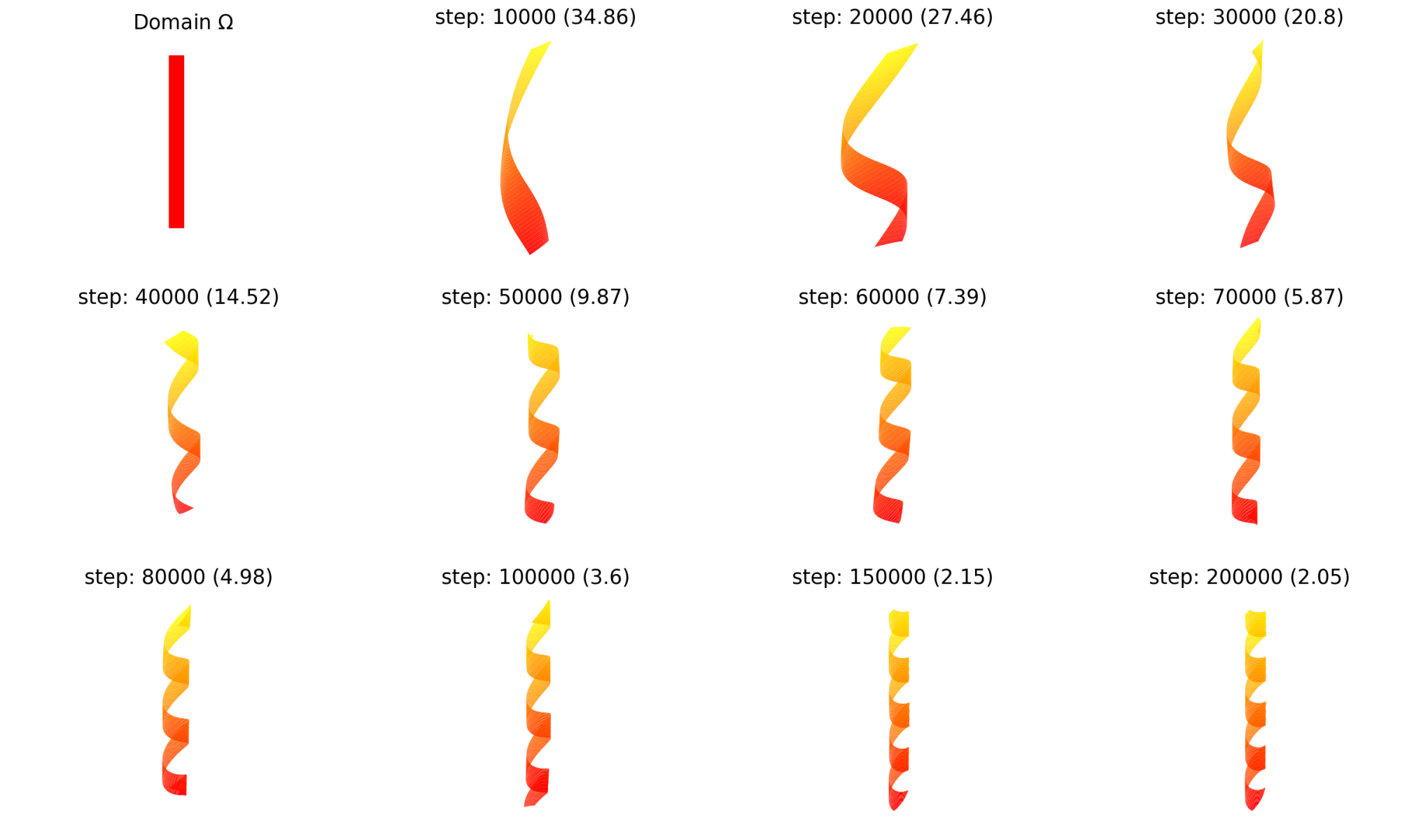}
  \caption{Free boundary: a DNA-like shape, $\beta = 500$. We train the network directly and plot the evolution of the bilayer plate. The number of the iteration steps and the energy are reported.}
  \label{fig:freeboundary}
\end{figure}
\begin{table}[htbp]
  \centering
  \begin{tabular}{cccc}
    \toprule
    $\beta$ & $E[\hat{u}]$ & $C[\hat{u}]$ &DNA-like shape  \\
    \midrule
    100 & 1.87& 1.32e-2 & Y\\
    500 & 1.90& 1.62e-2 & Y\\
    1000 & 1.95&9.12e-3 &Y\\
    \bottomrule
  \end{tabular}
\caption{Test error, DNA-like shape. We report the energy, $L^2$-isometric error and whether the plate reaches a DNA-like shape after $2e5$ step iterations.}\label{tab.dna}
\end{table}
\section{Conclusion}\label{sec:conclusion}
We presents a deep learning method for simulating the large deformation of the bilayer plates. Through extensive numerical experiments, we have demonstrated that our method outperforms the existing techniques in terms of accuracy and convergence. Our approach achieves a significant reduction in the relative error of the energy and effectively maintains the isometric constraint, resulting in an improved accuracy. Moreover, our method exhibits the ability to converge to an absolute minimizer, overcoming the limitations of the gradient flow method. These findings highlight the potential of deep neural networks in accurately simulating complex behaviors of bilayer plates and open avenues for further research in this area.

\bibliographystyle{amsplain}
\bibliography{notes}
\end{document}